\definecolor{ao}{rgb}{0.0, 0.5, 0.0}
\definecolor{darkmagenta}{rgb}{0.55, 0.0, 0.55}
\def\eod{\vrule height 6pt width 5pt depth 0pt}
\newenvironment{proof}{\noindent {\bf Proof:} \hspace{.2em}}
{\hspace*{\fill}{\eod}}
\newtheorem{theorem}{Theorem}
\newtheorem{lemma}[theorem]{Lemma}
\newtheorem{definition}[theorem]{Definition}
\newtheorem{corollary}[theorem]{Corollary}
\newtheorem{example}[theorem]{Example}
\newcommand{\sB}{\mathcal{B}}
\newcommand{\od}{ \overline{ d}}
\newcommand{\sL}{\mathcal{ L}}
\newcommand{\RR}{ \mathbb{R}}
\newcommand{\ZZ}{ \mathbb{Z}}
\newcommand{\QQ}{ \mathbb{Q}}
\newcommand{\vsp}{\vskip 1em}
\newcommand{\ch}{ \mathsf{ch}}
\newcommand{\CF}{ \mathsf{CF}}
\newcommand{\rhalf}{\lfloor r/2 \rfloor}
\newcommand{\nhalf}{\lfloor n/2 \rfloor}
\newcommand{\comment}[1]{}
\newcommand{\SSS}{\mathfrak{S}}
\newcommand{\CC}{ \mathbb{C}}
\newcommand{\GTS}{\mathsf{GTS}}
\newcommand{\law}{\mathsf{Lexaway}}
\newcommand{\awy}{\mathsf{Aw}}
\newcommand{\BT}{\mathsf{BT}}
\newcommand{\wt}{\mathsf{wt}}
\newcommand{\id}{\mathsf{id}}
\title{Generalized Matrix polynomials of Tree Laplacians indexed 
by Symmetric functions and the $\GTS$ poset}
\begin{document}
\author{Mukesh Kumar Nagar\\
	Department of Mathematics\\
	Indian Institute of Technology Kanpur \\
	Kanpur 208 016, India.\\
	email: mukesh.kr.nagar@gmail.com
	\and
	Sivaramakrishnan Sivasubramanian\\
	Department of Mathematics\\
	Indian Institute of Technology Bombay\\
	Mumbai 400 076, India.\\
	email: krishnan@math.iitb.ac.in
}

\maketitle

\begin{abstract}
Let $T$ be a tree on $n$ vertices with $q$-Laplacian $\sL_T^q$ and Laplacian matrix $L_T$. 
Let $\GTS_n$ be the generalized tree shift poset on the set of unlabelled 
trees on $n$ vertices.  Inequalities are known between coefficients of the
immanantal polynomial of $L_T$ (and $\sL_T^q$) as we go up the poset $\GTS_n$.
Using the Frobenius characteristic, this can be thought as a result involving the 
schur symmetric function $s_{\lambda}$.  In this paper, we use an arbitrary
symmetric function to define a {\it generalized matrix function} of an $n \times n$ matrix.
When the symmetric function is the monomial and the forgotten symmetric function,
we generalize such inequalities among coefficients of the generalized matrix polynomial of 
$\sL_T^q$ as we go up  the $\GTS_n$ poset.
\end{abstract}

{\it Keywords:} Tree, $\GTS_n$ poset, Laplacian, 
monomial symmetric function, generalized matrix polynomial.  

\vsp

{\it AMS Subject Classification:} 05C05, 06A06, 15A15

\section{Introduction}
\label{sec:intro}

For a positive integer $n$, let $[n]=\{1,2,\ldots,n\}$ and  let
$\SSS_n$ denote the symmetric group on the set $[n]$.
We denote partitions $\lambda$ of the number $n$ as $\lambda \vdash n$. 
We write partitions using the exponential notation, with multiplicities of parts written as exponents.
For $\lambda \vdash n$, let $\chi_{\lambda}^{}$ be the irreducible 
character of $\SSS_n$ 
over $\CC$ indexed by $\lambda$. We think of $\chi_{\lambda}^{}$ 
as a function $\chi_{\lambda}^{} : \SSS_n \mapsto \ZZ$.
With respect to an irreducible character 
$\chi_{\lambda}^{}$, 
define the immanant of the $n \times n$
matrix $A = (a_{i,j})_{1 \leq i,j \leq n}$ as 
\begin{eqnarray}
  \label{eqn:immanant}
  d_{\lambda}(A) & = & \sum_{\psi \in \SSS_n} \chi_{\lambda}^{}(\psi) 
  	\prod_{i=1}^n a_{i,\psi(i)}. 
\end{eqnarray}

Let 
$\Lambda_{\QQ}^n$ denote the vector space of degree $n$ symmetric functions with 
coefficients from $\QQ$.  
The set of monomial symmetric functions $\{m_{\lambda} \}_{\lambda\vdash n}$ is one of the well known bases of $\Lambda_{\QQ}^n$. 
We refer the reader to the books by
Stanley \cite{EC2} and  by Mendes and Remmel \cite{mendes-remmel-book} for background on
symmetric functions.  $\Lambda_{\QQ}^n$ actually has an inner product structure as well
(see  Stanley \cite{EC2}). Another inner product space often studied is $\CF_n$,
the space of class functions from $\SSS_n \mapsto \CC$.  Further, there is a well
known isometry between these two spaces called the Frobenius characteristic, denoted  
$\ch: \CF_n \rightarrow \Lambda_{\QQ}^n$
(see Stanley \cite{EC2}).

Let $\gamma \in \Lambda_{\QQ}^n$ and consider $\Gamma_{\gamma} = \ch^{-1}(\gamma)$
its inverse Frobenius image.  Clearly, $\Gamma_{\gamma} \in \CF_n$ is a class function 
indexed by $\gamma$.
Define the generalized matrix function (GMF henceforth) of an $n \times n$ matrix $A$ with respect
to $\gamma$ as 
\begin{equation}
  \label{eqn:gen_matrix_fn}
  d_{\gamma}(A) = \sum_{\psi \in \SSS_n} \Gamma_{\gamma}(\psi) \prod_{i=1}^n a_{i,\psi(i)}.
\end{equation} 

Define the generalized matrix polynomial $\zeta_{\gamma}^A(x)$ of 
$A$ in a new variable $x$ with respect to a symmetric function $\gamma$ as 
follows:  $\zeta_{\gamma}^A(x)=d_{\gamma}(xI-A)$.  
It is well known  that the inverse Frobenius image $\ch^{-1}(s_{\lambda})$ 
of the Schur symmetric function $s_{\lambda}$ is $\chi_{\lambda}^{}$,
the irreducible character of $\SSS_n$ over $\CC$ indexed by $\lambda$  (see \cite{EC2}). 
Thus, from \eqref{eqn:immanant} and \eqref{eqn:gen_matrix_fn}, we can see that  $d_{s_{\lambda}}(A)=d_{\lambda}(A)$ and $\zeta_{s_{\lambda}}^A(x)=d_{\lambda}(xI-A)$.
 
Csikv{\'a}ri in \cite{csikvari-poset1} defined a poset 
on the set of unlabelled trees with $n$ vertices that we
denote in this paper as $\GTS_n$.  Among other results, he showed that 
if one goes up along the $\GTS_n$ poset, all coefficients of the 
characteristic polynomial of the Laplacian matrix $L_T$ of a tree 
$T$ decrease in absolute value.
This result was generalized by Mukesh and Sivasubramanian in \cite{mukesh-siva-gts} to 
immanantal polynomials of $\sL_T^q$ indexed by any $\lambda \vdash n$, where $\sL_T^q$ is
the $q$-Laplacian matrix of $T$ (see Theorem \ref{thm:main_earlier}).

Let $\RR^+$ denote the set of non-negative real numbers and $\RR^+[q^2]$ denote the 
set of polynomials in $q^2$ with coefficients in $\RR^+.$   Let $m_{\lambda}\in \Lambda_{\QQ}^n$ 
be the monomial symmetric function indexed by $\lambda \vdash n$. 
In this paper, we prove the following result which shows monotonicity of the coefficient 
of $(-1)^rx^{n-r}$ for $r=0,1,\ldots,n$ in $\zeta_{m_{\lambda}}^{\sL_{T}^q}(x)$ 
when we go up along $\GTS_n$. 

\begin{theorem}
\label{thm:main}
Let $T_1$ and $T_2$ be two trees with $n$ vertices such that $T_2$ covers $T_1$ 
in $\GTS_n$.  Let $\sL_{T_1}^q$ and $\sL_{T_2}^q$ be the $q$-Laplacians 
of $T_1$ and $T_2$ respectively.  For $\lambda \vdash n$, let 
\begin{eqnarray*}
  \zeta^{\sL_{T_1}^q}_{m_{\lambda}}(x) & = & d_{m_{\lambda}}(xI -\sL_{T_1}^q) = \sum_{r=0}^n (-1)^r 
c_{m_{\lambda},r}^{\sL_{T_1}^q}(q) x^{n-r} \mbox{ and} \\
\zeta^{\sL_{T_2}^q}_{m_{\lambda}}(x) & = & d_{m_{\lambda}}(xI -\sL_{T_2}^q) = \sum_{r=0}^n (-1)^r 
c_{m_{\lambda},r}^{\sL_{T_2}^q}(q) x^{n-r}.
\end{eqnarray*}
Then for all $\lambda \vdash n$, we assert that  
$c_{m_{\lambda},r}^{\sL_{T_1}^q}(q) - c_{m_{\lambda},r}^{\sL_{T_2}^q}(q) 
\in \RR^+[q^2]$, where $r=0,1,\ldots,n$.   Further if $\lambda \neq 2^k,1^{n-2k}\vdash n$, 
then, $d_{m_{\lambda}}(xI -\sL_{T_1}^q)=d_{m_{\lambda}}(xI -\sL_{T_2}^q)=0.$
\end{theorem}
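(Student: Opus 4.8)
The plan is to reduce the generalized matrix function to a weighted sum over matchings of $T$, from which the vanishing assertion is immediate, and then to convert the monotonicity into a single refined nonnegativity statement. First I would exploit that $\gamma\mapsto d_\gamma$ is linear and that $\ch^{-1}(p_\mu)$ is the class function equal to $z_\mu$ on permutations of cycle type $\mu$ and $0$ elsewhere. Expanding $m_\lambda=\sum_\mu([p_\mu]m_\lambda)\,p_\mu$ and grouping the sum in \eqref{eqn:gen_matrix_fn} by cycle type gives
\[
 d_{m_\lambda}(xI-\sL_T^q)=\sum_{\mu}\Gamma_{m_\lambda}(\mu)\,S_\mu^T(x),\qquad
 S_\mu^T(x)=\sum_{\mathrm{type}(\psi)=\mu}\ \prod_{i=1}^n (xI-\sL_T^q)_{i,\psi(i)} .
\]
Since $\sL_T^q$ is supported on the diagonal and on the edges of $T$, a nonzero product forces every nontrivial cycle of $\psi$ to be a transposition along an edge; hence $S_\mu^T\equiv 0$ unless $\mu=2^j1^{n-2j}$, and then $S^T_{2^j1^{n-2j}}(x)$ is the sum over $j$-matchings $M$ of $T$ of $\prod_{\{a,b\}\in M}(\sL_T^q)_{a,b}(\sL_T^q)_{b,a}\cdot\prod_{i\notin V(M)}(x-(\sL_T^q)_{i,i})$, every edge factor lying in $\RR^+[q^2]$.

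For the vanishing statement I would use $\Gamma_{m_\lambda}(\mu)=\langle m_\lambda,p_\mu\rangle=[h_\lambda]\,p_\mu$ (duality of $\{m_\lambda\}$ and $\{h_\lambda\}$) together with the classical triangularity of the transition between $\{m\}$ and $\{p\}$: a nonzero $[p_\mu]m_\lambda$ forces $\mu$ to be a coarsening of $\lambda$, i.e.\ its parts are sums of parts of $\lambda$. Coarsening never decreases a part, so if $\lambda$ has a part $\ge 3$ then every such $\mu$ has a part $\ge 3$ and can never equal $2^j1^{n-2j}$. By the first paragraph every surviving summand then vanishes, giving $d_{m_\lambda}(xI-\sL_{T_1}^q)=d_{m_\lambda}(xI-\sL_{T_2}^q)=0$ whenever $\lambda\neq 2^k1^{n-2k}$.

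It remains to handle $\lambda=2^k1^{n-2k}$. Here I would compute the class-function values explicitly from $p_1=h_1$ and $p_2=2h_2-h_1^2$; expanding $p_{2^j1^{n-2j}}=(2h_2-h_1^2)^j h_1^{n-2j}$ and reading off the coefficient of $h_{2^k1^{n-2k}}$ yields
\[
 \Gamma_{m_{2^k1^{n-2k}}}(2^j1^{n-2j})=[h_{2^k1^{n-2k}}]\,p_{2^j1^{n-2j}}=(-1)^{j-k}\,2^k\binom{j}{k}.
\]
Writing $\sigma_{j,r}^{T}\in\RR^+[q^2]$ for the coefficient of $(-1)^r x^{n-r}$ in $S^{T}_{2^j1^{n-2j}}(x)$ (the sign works out because the unmatched block has degree $n-2j$) and setting $\delta_{j,r}:=\sigma^{T_1}_{j,r}-\sigma^{T_2}_{j,r}$, the asserted difference becomes $2^k\sum_{j\ge k}(-1)^{j-k}\binom{j}{k}\delta_{j,r}$. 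Because $\sum_j(-1)^{j-k}\binom{j}{k}\delta_{j,r}$ is exactly the coefficient of $(1+y)^k$ in $\sum_j\delta_{j,r}\,y^j$, the whole family of assertions (over all $k$ and $r$) is equivalent to the single statement that $\sum_j(\sigma^{T_1}_{j,r}-\sigma^{T_2}_{j,r})\,y^j$, re-expanded in powers of $(1+y)$, has all coefficients in $\RR^+[q^2]$.

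This reformulation is where the real work lies, and it is the step I expect to be the main obstacle. Its $k=0$ instance (the value at $y=-1$) is the characteristic-polynomial monotonicity already supplied by Theorem \ref{thm:main_earlier}, but the cases $k\ge 1$ cannot be obtained from the immanantal inequalities by taking nonnegative combinations: every character vector $(\chi_\mu(2^j1^{n-2j}))_j$ takes the positive value $\dim\mu$ at $j=0$, whereas $(-1)^{j-k}\binom{j}{k}$ vanishes at $j=0$ for $k\ge1$, so no nonnegative combination can reproduce it. I would therefore establish the ``expansion around $y=-1$'' nonnegativity directly from the covering relation in $\GTS_n$, analyzing its explicit local edge-transfer and tracking the induced change in each weighted matching sum $\sigma_{j,r}^{T}$, with an induction on $n$ that peels off the transferred branch and uses the $k=0$ inequality to anchor the higher $(1+y)$-coefficients. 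Checking that the covering move shifts each $\delta_{j,r}$ in exactly the sign pattern demanded by the binomial transform is the crux.
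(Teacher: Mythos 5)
Your reductions are sound and in fact run parallel to the paper's: the values $\Gamma_{m_\lambda}(2^j1^{n-2j})=(-1)^{j-k}2^k\binom{j}{k}$ for $\lambda=2^k,1^{n-2k}$ (and $0$ otherwise) that you extract from $p_1=h_1$, $p_2=2h_2-h_1^2$ are exactly Corollary \ref{cor:remmel_mono}, which the paper derives instead from the E\u{g}ecio\u{g}lu--Remmel brick-tabloid formula, and your refinement argument for the vanishing when $\lambda\neq 2^k,1^{n-2k}$ is correct. The genuine gap is that you stop precisely where the theorem still has to be proved: the nonnegativity of $\sum_{j\ge k}(-1)^{j-k}\binom{j}{k}\bigl(\sigma^{T_1}_{j,r}-\sigma^{T_2}_{j,r}\bigr)$ is flagged as ``where the real work lies'' and is supported only by a plan (local analysis of the covering move, induction on $n$), not by an argument. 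As written, the monotonicity half of the statement is unproved.

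What you are missing is that the binomial transform you reduce everything to is already a named and controlled object. Your matching-sum coefficients satisfy $\sigma_{j,r}^T=\sum_{i\ge j}\binom{i}{j}a_{i,r}(T,q)$, where $a_{i,r}(T,q)$ are the orientation counts appearing in Lemma \ref{lem:coeff_gmf_poly} (this is forced by comparing $c_{\lambda,r}=\sum_j\chi_\lambda(j)\sigma_{j,r}$ with Lemma \ref{lem:coeff_imm_poly} over all $\lambda$, since the character table is invertible); inverting the transform gives $\sum_{j\ge k}(-1)^{j-k}\binom{j}{k}\sigma^{T}_{j,r}=a_{k,r}(T,q)$. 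Hence your ``single refined nonnegativity statement'' is exactly $a_{k,r}(T_1,q)-a_{k,r}(T_2,q)\in\RR^+[q^2]$, which is Lemma \ref{lem:diff_a_k,r(T,q)}, imported from the earlier Nagar--Sivasubramanian paper where it is proved by an injection between orientations. The paper's proof is then a two-line combination: Lemma \ref{lem:frob_inv_monomial} collapses the coefficient to $2^k a_{k,r}(T_j,q)$ and Lemma \ref{lem:diff_a_k,r(T,q)} finishes. Your observation that the $k\ge 1$ cases are not nonnegative combinations of the immanantal inequalities of Theorem \ref{thm:main_earlier} is correct, but it does not mean new covering-move combinatorics is needed: the finer ingredient (monotonicity of each individual $a_{k,r}$, not merely of their $\alpha$-weighted sums) is precisely what underlies Theorem \ref{thm:main_earlier} as well, and it is already available to be cited.
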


Recall that for $\gamma \in \Lambda_{\QQ}^n$, $\Gamma_{\gamma} = \ch^{-1}(\gamma)$. 
For the proof of Theorem \ref{thm:main}, we show the following lemma involving 
$\Gamma_{m_{\lambda}}=\ch^{-1}(m_{\lambda})$ and binomial coefficients.
Let $\Gamma_{\gamma}(j)$ denote the class function  $\Gamma_{\gamma}(\cdot)$ evaluated at a 
permutation $\psi \in \SSS_n$ with cycle type $2^j,1^{n-2j}$.  For $0 \leq i \leq \nhalf$, define
\begin{equation}
  \alpha_i(\gamma) = \sum_{j=0}^i \binom{i}{j} \Gamma_{\gamma}(j). \label{eqn:main}
\end{equation}

We prove the following lemma which we believe is of independent interest.

\begin{lemma}
\label{lem:frob_inv_monomial}
For all $\lambda \vdash n$ and for  $0\leq i \leq \nhalf$, 
the quantity  
$\alpha_{i}^{}(m_{\lambda})$ is a non-negative integral multiple of  $2^i$. 
Moreover  for $0\leq i \leq \nhalf$,  
$\alpha_{i}^{}(m_{\lambda})  = 2^i$ if $\lambda=2^i,1^{n-2i}$ and 
$0$ otherwise.
\end{lemma}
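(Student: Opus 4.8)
The plan is to transport the whole computation from the space $\CF_n$ of class functions to the symmetric function side via the isometry $\ch$, where the binomial sum collapses neatly. First I would record an evaluation formula: for any $\gamma \in \Lambda_{\QQ}^n$ and any $\mu \vdash n$, the value of the inverse Frobenius image at cycle type $\mu$ is a Hall inner product,
\[
\Gamma_{\gamma}(\mu) = \la p_{\mu}, \gamma \ra .
\]
This is immediate from the standard formula $\ch(f) = \sum_{\nu \vdash n} z_{\nu}^{-1} f(\nu)\, p_{\nu}$ together with the orthogonality relation $\la p_{\mu}, p_{\nu} \ra = z_{\mu}\, \delta_{\mu\nu}$: writing $\gamma = \ch(\Gamma_{\gamma})$ and pairing with $p_{\mu}$ recovers exactly $\Gamma_{\gamma}(\mu)$. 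Specializing $\mu$ to the cycle type $2^j,1^{n-2j}$ gives $\Gamma_{\gamma}(j) = \la p_2^{\,j} p_1^{\,n-2j}, \gamma \ra$.

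Next I would substitute this into the definition \eqref{eqn:main} of $\alpha_i$ and pull the (bilinear) inner product outside the finite sum:
\[
\alpha_i(\gamma) = \sum_{j=0}^i \binom{i}{j} \la p_2^{\,j} p_1^{\,n-2j}, \gamma \ra
= \la\, p_1^{\,n-2i}\textstyle\sum_{j=0}^i \binom{i}{j} p_2^{\,j} p_1^{\,2(i-j)},\ \gamma \ra
= \la\, p_1^{\,n-2i}(p_1^2 + p_2)^i,\ \gamma \ra .
\]
The decisive step is the identification $p_1^2 + p_2 = 2 h_2$ (Newton's identity in degree two), which together with $p_1 = h_1$ turns the argument of the inner product into $2^i\, h_2^{\,i} h_1^{\,n-2i} = 2^i\, h_{2^i,1^{n-2i}}$. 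Hence, for \emph{every} $\gamma \in \Lambda_{\QQ}^n$,
\[
\alpha_i(\gamma) = 2^i\, \la h_{2^i,1^{n-2i}},\ \gamma \ra .
\]

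Finally I would set $\gamma = m_{\lambda}$ and invoke the duality of the complete homogeneous and monomial bases, $\la h_{\mu}, m_{\lambda} \ra = \delta_{\mu\lambda}$, to obtain $\alpha_i(m_{\lambda}) = 2^i\, \delta_{\lambda,\, 2^i,1^{n-2i}}$. This equals $2^i$ when $\lambda = 2^i,1^{n-2i}$ and $0$ otherwise; in particular it is always a non-negative integral multiple of $2^i$, so the ``Moreover'' assertion in fact subsumes the first claim. The hypothesis $0 \le i \le \nhalf$ is precisely what guarantees that $2^i,1^{n-2i}$ is a genuine partition of $n$, so that the inner product above is well defined. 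I do not anticipate a serious obstacle: the only point needing care is the bookkeeping in the evaluation formula $\Gamma_{\gamma}(\mu) = \la p_{\mu}, \gamma \ra$, with the constants $z_{2^j,1^{n-2j}} = 2^j\, j!\,(n-2j)!$ implicit in it, and verifying $p_1^2+p_2 = 2h_2$; once the binomial sum is recognized as $(p_1^2+p_2)^i$, duality forces the result.
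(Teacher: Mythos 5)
Your argument is correct, and it takes a genuinely different route from the paper. The paper works entirely on the class-function side: it invokes the E\u{g}ecio\u{g}lu--Remmel brick-tabloid interpretation of $\ch^{-1}(m_{\lambda})$ to get the explicit values $M^{\lambda}(j)=(-1)^{j-k}2^k\binom{j}{k}$ for $\lambda=2^k,1^{n-2k}$ (and $0$ otherwise), and then kills the binomial sum $\sum_j(-1)^{j-k}2^k\binom{j}{k}\binom{i}{j}$ by the identity $\binom{j}{k}\binom{i}{j}=\binom{i}{k}\binom{i-k}{j-k}$ and the alternating-sum collapse $(1-1)^{i-k}$. You instead transport everything to $\Lambda_{\QQ}^n$ via the adjunction $\Gamma_{\gamma}(\mu)=\la p_{\mu},\gamma\ra$, recognize the binomial sum as $(p_1^2+p_2)^i=(2h_2)^i$, and land on the clean identity $\alpha_i(\gamma)=2^i\,\la h_{2^i,1^{n-2i}},\gamma\ra$ valid for \emph{every} $\gamma\in\Lambda_{\QQ}^n$; the lemma then follows from $\la h_{\mu},m_{\lambda}\ra=\delta_{\mu\lambda}$. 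Each step checks out (including the cancellation of the $z_{\mu}$ constants and the Newton identity $p_1^2+p_2=2h_2$). What your approach buys is considerable generality: the same master formula immediately gives $\alpha_i(f_{\lambda})$ of Lemma \ref{lem:alpha_f} via $h_{\mu}=\sum_{\lambda}(\text{nonneg.})\,m_{\lambda}$ expanded against the $f$-basis, recovers the non-negativity in Lemma \ref{lem:p_h_e_sum} (the pairing of $h_{2^i,1^{n-2i}}$ against $h_{\lambda}$, $e_{\lambda}$, $p_{\lambda}$, or $s_{\lambda}$ is a product of multinomial-type or Kostka numbers), and even re-derives the Chan--Lam Lemma \ref{lem:chan_lam_bino_char}. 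What the paper's route buys is a self-contained combinatorial derivation from the brick-tabloid theorem, which it needs anyway for the forgotten-function case; but your proof is shorter and arguably more illuminating.
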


\section{Preliminaries}

We now give some motivational background for our results and place it in its context.  
The normalized immanant
of a matrix $A$ corresponding to a partition $\lambda$ is 
defined as $\displaystyle \od_{\lambda}(A) = \frac{d_{\lambda}(A)}{ \chi_{\lambda}^{}(\id)}$.
Here, $\chi_{\lambda}^{}(\id)$ equals the dimension of the irreducible representation
of $\SSS_n$ over $\CC$ indexed by $\lambda$.

Schur \cite{schur-immanant-ineqs} showed 
that among normalized immanants, the determinant is the smallest 
normalized immanant for any positive semidefinite Hermitian matrix.
This shows that for any positive semidefinite Hermitian matrix $A$, all its
normalized immanants (and hence immanants) are non-negative.
Though the immanant $d_{\lambda}(A)$ is non-negative, \eqref{eqn:immanant}
is not a non-negative expression for $d_{\lambda}(A)$ as 
$\chi_{\lambda}^{}(\psi) \prod_{i=1}^n a_{i,\psi(i)}$ is not necessarily 
non-negative for all
$\psi \in \SSS_n$ that contribute to $d_{\lambda}(A)$.  

 Recall that the
Laplacian matrix $L_G$ of a graph $G$ is defined as $L_G = D-A$, 
where $D$ is the diagonal matrix with degrees of $G$ on the diagonal and $A$ 
is the adjacency matrix of $G$.  It is well known 
that $L_G$ is positive semidefinite for all graphs $G$ (see \cite{godsil-royle}).
When the matrix $A$ is the  Laplacian $L_T$ of a tree $T$,
then Chan and Lam in \cite{hook_immanant_explained-chan_lam} gave the
following two results which give an alternate positive expression for the immanant $d_{\lambda}(L_T)$.

We denote the number of parts of a partition $\lambda$ of $n$  as $l(\lambda)$.
For $\lambda \vdash n$, let $\chi_{\lambda}^{}(j)$ denote the character value 
$\chi_{\lambda}^{}(\cdot)$
evaluated at a permutation $\psi \in \SSS_n$ with cycle type $2^j, 1^{n-2j}$.
For $0 \leq i \leq \nhalf$ and $\lambda \vdash n$, define 
\begin{equation}
  \alpha_{i,\lambda} = \sum_{j=0}^i \binom{i}{j} \chi_{\lambda}^{}(j). \label{eqn:main_char}
\end{equation}

\begin{lemma}[Chan and Lam, \cite{chan-lam-binom-coeffs-char}]
\label{lem:chan_lam_bino_char}
For all $\lambda \vdash n$ and for  $0\leq i \leq \nhalf$, the quantity  
$\alpha_{i, \lambda}$ is a non-negative integral multiple of $2^i$.  
Moreover  for $1\leq i \leq \nhalf$, 
$\alpha_{i,\lambda}=0$ if and only if  $l(\lambda)>n-i$. 
\end{lemma}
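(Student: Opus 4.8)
The plan is to convert the purely numerical sum defining $\alpha_{i,\lambda}$ into a coefficient-extraction problem in the ring of symmetric functions, where the answer becomes transparent. First I would recall the Schur expansion of the power-sum symmetric functions: for any cycle type $\mu \vdash n$ one has $p_\mu = \sum_{\nu \vdash n}\chi_\nu(\mu)\, s_\nu$ (this is the inverse of $s_\nu = \sum_\mu z_\mu^{-1}\chi_\nu(\mu)\,p_\mu$, and is a manifestation of the fact used in the excerpt that $\ch(\chi_\nu)=s_\nu$). Applying this to the cycle type $2^j1^{n-2j}$ gives $p_2^{\,j}p_1^{\,n-2j}=\sum_{\lambda}\chi_\lambda(j)\,s_\lambda$, so that weighting by $\binom{i}{j}$ and summing over $j$ yields $\sum_{\lambda}\alpha_{i,\lambda}\,s_\lambda=\sum_{j=0}^i\binom{i}{j}p_2^{\,j}p_1^{\,n-2j}$, in view of the definition \eqref{eqn:main_char}. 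Hence $\alpha_{i,\lambda}$ is precisely the coefficient of $s_\lambda$ in the right-hand side.

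Second I would simplify that symmetric function. Factoring $p_1^{\,n-2i}$ out and applying the binomial theorem gives $\sum_{j=0}^i\binom{i}{j}p_2^{\,j}p_1^{\,n-2j}=p_1^{\,n-2i}(p_1^2+p_2)^i$. The elementary identity $p_1^2+p_2=2h_2$ together with $p_1=h_1$ then turns this into $2^i h_1^{\,n-2i}h_2^{\,i}=2^i h_\mu$, where $\mu=(2^i,1^{n-2i})$ and $h_\mu$ denotes the complete homogeneous symmetric function indexed by $\mu$. Since the coefficient of $s_\lambda$ in $h_\mu$ is the Kostka number $K_{\lambda\mu}$, I obtain the closed form $\alpha_{i,\lambda}=2^i\,K_{\lambda,(2^i1^{n-2i})}$. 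The first assertion of the lemma is then immediate, because Kostka numbers are non-negative integers, so $\alpha_{i,\lambda}$ is a non-negative integral multiple of $2^i$ for every $\lambda\vdash n$ and every $0\le i\le\nhalf$.

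Third, for the vanishing criterion I would invoke the standard fact that $K_{\lambda\mu}\neq 0$ if and only if $\lambda$ dominates $\mu$ in the dominance order (see \cite{EC2}). To read off the stated condition on $l(\lambda)$, the cleanest route is to conjugate: dominance reverses under conjugation, so $\lambda\trianglerighteq\mu$ iff $\mu'\trianglerighteq\lambda'$. Here $\mu=(2^i,1^{n-2i})$ has conjugate $\mu'=(n-i,\,i)$, a partition with only two parts. Consequently every partial sum $\sum_{k\le m}\mu'_k$ with $m\ge 2$ already equals $n$ and imposes no constraint, so the relation $\mu'\trianglerighteq\lambda'$ collapses to the single inequality $n-i\ge \lambda'_1=l(\lambda)$. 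Thus $\alpha_{i,\lambda}\neq 0$ iff $l(\lambda)\le n-i$, which is exactly the claimed equivalence $\alpha_{i,\lambda}=0\iff l(\lambda)>n-i$ for $1\le i\le\nhalf$.

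The whole argument is short once this framework is in place, so there is no single hard obstacle; rather, the two steps requiring the most care are the symmetric-function reduction $\sum_{j}\binom{i}{j}p_2^{\,j}p_1^{\,n-2j}=2^i h_\mu$ (one must apply the binomial theorem to the correct factor $p_1^2+p_2$ and check that $\mu=(2^i,1^{n-2i})$ is a genuine partition throughout the range $0\le i\le\nhalf$) and the conjugation argument (one must confirm that after passing to $\mu'=(n-i,i)$ the first partial-sum inequality is the only binding one). I would also note, as a sanity check, the extremal case $\lambda=2^i1^{n-2i}$, where $K_{\lambda\mu}=1$ and $\alpha_{i,\lambda}=2^i$, and the case $i=0$, where $\alpha_{0,\lambda}=\chi_\lambda(\id)=\dim\lambda>0$, consistent with both assertions.
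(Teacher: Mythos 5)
Your argument is correct in every step: the inversion $p_2^{\,j}p_1^{\,n-2j}=\sum_\lambda \chi_\lambda(j)s_\lambda$, the binomial collapse $\sum_{j=0}^i\binom{i}{j}p_2^{\,j}p_1^{\,n-2j}=p_1^{\,n-2i}(p_1^2+p_2)^i=2^i h_{(2^i,1^{n-2i})}$ (using $h_2=\tfrac12(p_1^2+p_2)$, and $n-2i\ge 0$ guarantees the factorization is legitimate), the identification $\alpha_{i,\lambda}=2^i K_{\lambda,(2^i,1^{n-2i})}$, and the dominance analysis via the conjugate $(n-i,i)$, where indeed only the first partial-sum inequality binds, so $K_{\lambda,(2^i,1^{n-2i})}>0$ iff $l(\lambda)\le n-i$. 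Note, however, that the paper itself offers no proof of this lemma: it is quoted as a known result of Chan and Lam, so there is no internal argument to compare yours against. That said, your route buys strictly more than what the paper uses. The closed form $\alpha_{i,\lambda}=2^iK_{\lambda,(2^i,1^{n-2i})}$ linearizes: for any $\gamma\in\Lambda^n_{\QQ}$ one gets $\alpha_i(\gamma)=2^i\la \gamma, h_{(2^i,1^{n-2i})}\ra$, and then the paper's Lemma \ref{lem:frob_inv_monomial} is immediate from the duality $\la m_\lambda,h_\mu\ra=\delta_{\lambda\mu}$ --- no brick tabloids or the E\u{g}ecio\u{g}lu--Remmel expansion (Theorem \ref{thm:main_thm_remmel_egec}) needed --- and Lemma \ref{lem:p_h_e_sum} likewise follows since $\la p_\lambda,h_\mu\ra$, $\la h_\lambda,h_\mu\ra$, $\la e_\lambda,h_\mu\ra$ are all non-negative. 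So your method is not just a valid proof of the cited statement but a uniform replacement for the paper's case-by-case computations of $\alpha_i(\gamma)$ across the bases. One cosmetic point: your equivalence actually holds for $i=0$ as well (there $K_{\lambda,(1^n)}=f^\lambda>0$ always and $l(\lambda)\le n$ always), so the restriction to $1\le i\le\nhalf$ in the statement costs you nothing.
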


\begin{theorem}[Chan and Lam, \cite{hook_immanant_explained-chan_lam}]
  \label{thm:positive_immanant}
 Let $L_T$ be the Laplacian matrix of a tree $T$ on $n$ vertices. Then,
 for all $\lambda \vdash n$, 
 $d_{\lambda}(L_T) = \sum_{i=0}^{\nhalf} \alpha_{i, \lambda} a_i(T)$,
 where $a_i(T)$ equals the number of vertex orientations with exactly 
 $i$  bidirected edges (and is hence a non-negative integer for all $i$).
\end{theorem}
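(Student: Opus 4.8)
The plan is to directly expand the defining sum \eqref{eqn:immanant} for $d_\lambda(L_T)$ and exploit the acyclicity of $T$ to drastically restrict which permutations contribute. First I would observe that a permutation $\psi \in \SSS_n$ contributes a nonzero term only when $\prod_{i=1}^n (L_T)_{i,\psi(i)} \neq 0$, which forces, for every $i$, either $\psi(i) = i$ or $\{i, \psi(i)\}$ to be an edge of $T$. Decomposing $\psi$ into cycles, any cycle of length $\ell \geq 3$ would trace a closed walk $i_1 \to i_2 \to \cdots \to i_\ell \to i_1$ along edges of $T$, i.e.\ a cycle in $T$; since $T$ is a tree this is impossible. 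Hence the only surviving permutations have cycle type $2^j, 1^{n-2j}$, and their $j$ transpositions $(i\ k)$ correspond to a set $M$ of $j$ pairwise disjoint edges of $T$, i.e.\ a matching.

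For such a $\psi$ determined by a matching $M$ of size $j$, each transposition $(i\ k) \in M$ contributes $(L_T)_{i,k}(L_T)_{k,i} = (-1)(-1) = 1$, while each fixed point $v$ outside $V(M)$ contributes the diagonal entry $(L_T)_{v,v} = \deg(v)$. Thus $\prod_i (L_T)_{i,\psi(i)} = \prod_{v \notin V(M)} \deg(v)$. Since $\chi_\lambda$ is a class function it takes the common value $\chi_\lambda(j)$ on all permutations of cycle type $2^j, 1^{n-2j}$, so I can group terms by $j$ and write
\[
  d_\lambda(L_T) = \sum_{j=0}^{\nhalf} \chi_\lambda(j)\, g_j, \qquad g_j = \sum_{\substack{M \text{ matching} \\ |M| = j}} \prod_{v \notin V(M)} \deg(v).
\]

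The remaining and conceptually central step is to interpret $g_j$ through vertex orientations. I would use that a vertex orientation is a choice, for each vertex $v$, of one incident edge (the edge $v$ points along), so that the total number of orientations is exactly $\prod_v \deg(v)$, and an edge is bidirected precisely when both its endpoints point along it. Observing that the bidirected edges of any orientation form a matching, I would count, for a fixed matching $M$ of size $j$, the orientations in which every edge of $M$ is bidirected: the endpoints in $V(M)$ are forced, and each remaining vertex is free, giving exactly $\prod_{v \notin V(M)} \deg(v)$ such orientations. Classifying these orientations by their actual (possibly larger) bidirected set $M' \supseteq M$ and then summing over all $M$ with $|M| = j$ yields the binomial identity $g_j = \sum_{k \geq j} \binom{k}{j} a_k(T)$, since a matching $M'$ with $|M'| = k$ is counted once for each of its $\binom{k}{j}$ sub-matchings of size $j$.

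Finally I would substitute this into the previous display and exchange the order of summation:
\[
  d_\lambda(L_T) = \sum_{j=0}^{\nhalf} \chi_\lambda(j) \sum_{k=j}^{\nhalf} \binom{k}{j} a_k(T) = \sum_{k=0}^{\nhalf} a_k(T) \sum_{j=0}^{k} \binom{k}{j} \chi_\lambda(j) = \sum_{k=0}^{\nhalf} \alpha_{k,\lambda}\, a_k(T),
\]
recognizing the inner sum as $\alpha_{k,\lambda}$ from \eqref{eqn:main_char}; non-negativity and integrality of each $a_i(T)$ are immediate since it counts orientations. I expect the main obstacle to be the bookkeeping in the orientation step, namely rigorously establishing that the bidirected edges form a matching and that counting the orientations extending a fixed matching produces exactly the coefficients $\binom{k}{j}$. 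The reduction to matchings via acyclicity and the entrywise weight computation are routine once this combinatorial dictionary is set up.
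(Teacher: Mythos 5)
Your proof is correct, but note that the paper contains no proof of this statement to compare against: it is quoted as a known result of Chan and Lam \cite{hook_immanant_explained-chan_lam}, with only the $q$-analogue machinery (Theorem \ref{thm:mukesh-siva-positive_immanant} and Lemma \ref{lem:coeff_imm_poly}) developed elsewhere and cited. Judged on its own, your argument is complete and is essentially the standard route. The reduction is sound: a nonzero term forces $\psi(i)=i$ or $\{i,\psi(i)\}\in E(T)$, a cycle of length $\ell\geq 3$ would trace a closed walk on distinct vertices along edges of $T$, impossible in a tree, so only involutions survive, their $2$-cycles form a matching $M$, each transposition contributes $(-1)(-1)=+1$, and grouping by cycle type is legitimate since $\chi_{\lambda}$ is a class function, giving $d_{\lambda}(L_T)=\sum_{j}\chi_{\lambda}(j)\,g_j$ with $g_j=\sum_{|M|=j}\prod_{v\notin V(M)}\deg(v)$. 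The orientation step is also airtight: since each vertex points along exactly one incident edge, a vertex lies in at most one bidirected edge, so the bidirected set of any orientation is a matching; forcing the $2j$ endpoints of a fixed $M$ leaves exactly $\prod_{v\notin V(M)}\deg(v)$ free completions, and double counting pairs $(M,\sigma)$ with $M$ contained in the bidirected set of $\sigma$ yields $g_j=\sum_{k\geq j}\binom{k}{j}a_k(T)$. Exchanging summations then recovers precisely $\alpha_{k,\lambda}=\sum_{j=0}^{k}\binom{k}{j}\chi_{\lambda}(j)$ from \eqref{eqn:main_char}. Your unweighted double count is exactly the $q=1$ shadow of the weighted count (via the statistic $\law$) underlying Theorem \ref{thm:mukesh-siva-positive_immanant}, so your proof also illuminates why the $q$-analogue takes the form it does; the only (trivial) edge case you leave implicit is $n=1$, where both sides vanish.
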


Lemma \ref{lem:chan_lam_bino_char} and Theorem \ref{thm:positive_immanant} make it clear that all immanants 
of  $L_T$ are non-negative.  Similar results are known for the $q$-Laplacian of $T$.  Define 
$\sL_G^q = I+q^2(D-I) -qA$ as the $q$-Laplacian of a graph $G$,  where $D$ and $A$ are as before and $I$ is 
the identity matrix.  Here $q$ is a real variable.   It is easy to see that
setting $q=1$ in $\sL_G^q$ gives us the usual combinatorial Laplacian $L_G$.
The matrix $\sL_G^q$ has appeared in the contexts
of Ihara- Selberg zeta functions of graphs $G$ (see Bass \cite{bass} and  Foata and Zeilberger \cite{foata-zeilberger-bass-trams}). When graph $G$ is a tree $T$,  $\sL_T^q$ has connections with  
inverse of the exponential distance matrix of $T$ (see Bapat, Lal and Pati 
\cite{bapat-lal-pati} and Nagar \cite{nagar}).  
Nagar and  Sivasubramanian in \cite{mukesh-siva-hook} gave the following $q$-analogue of Theorem
\ref{thm:positive_immanant} thereby generalising Theorem 
\ref{thm:positive_immanant}  to $\sL_T^q$, the $q$-Laplacian 
of $T$. 

\begin{theorem}[Nagar and Sivasubramanian]
  \label{thm:mukesh-siva-positive_immanant}
 Let $\sL_T^q$ be the $q$-Laplacian matrix of a tree $T$ on $n$ vertices. Then,
 for all $\lambda \vdash n$, 
 $d_{\lambda}(\sL_T^q) = \sum_{i=0}^{\nhalf} \alpha_{i, \lambda} a_i(T,q)$,
 where the $a_i(T,q)$ is a polynomial in $q^2$, counting vertex 
 orientations with exactly  $i$  bidirected edges with respect to a
 statistic $\law(\cdot)$.
\end{theorem}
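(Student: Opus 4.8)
The plan is to follow the combinatorial strategy behind the $q=1$ case (Theorem \ref{thm:positive_immanant}), refining every step so as to keep track of the powers of $q^2$. First I would expand the immanant directly from its definition,
\[
d_{\lambda}(\sL_T^q) = \sum_{\psi \in \SSS_n} \chi_{\lambda}(\psi) \prod_{i=1}^n (\sL_T^q)_{i,\psi(i)},
\]
and exploit that $T$ is acyclic. A permutation $\psi$ contributes a nonzero product only if, for every $i$, either $\psi(i)=i$ or $i\psi(i)$ is an edge of $T$; since a cycle of length $\geq 3$ in $\psi$ would force a cycle of the same length in $T$, the only surviving $\psi$ are involutions whose transpositions form a matching $M$ of $T$. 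Such a $\psi$ has cycle type $2^{|M|},1^{n-2|M|}$, so its character value is $\chi_{\lambda}(|M|)$. Each transposition $(u,v)$ contributes $(\sL_T^q)_{u,v}(\sL_T^q)_{v,u}=(-q)(-q)=q^2$ and each fixed point $v$ contributes the diagonal entry $1+q^2(d_v-1)$, where $d_v$ is the degree of $v$. This yields
\[
d_{\lambda}(\sL_T^q) = \sum_{M} \chi_{\lambda}(|M|)\, q^{2|M|} \prod_{v \notin V(M)} \bigl(1+q^2(d_v-1)\bigr),
\]
the sum ranging over all matchings $M$ of $T$.

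The heart of the argument is to reorganize this matching sum into the orientation sum $\sum_{i} \alpha_{i,\lambda}\, a_i(T,q)$. Because $\alpha_{i,\lambda}=\sum_{j} \binom{i}{j}\chi_{\lambda}(j)$, it suffices, after collecting the coefficient of each $\chi_{\lambda}(j)$, to prove the $q$-identity
\[
\sum_{M:\,|M|=j} q^{2j} \prod_{v \notin V(M)} \bigl(1+q^2(d_v-1)\bigr) = \sum_{i \geq j} \binom{i}{j}\, a_i(T,q).
\]
I would establish this by expanding each diagonal factor $1 + q^2(d_v-1)$, reading the term $1$ as ``$v$ selects a distinguished incident edge'' (weight $q^0$) and each of the $d_v-1$ summands of $q^2(d_v-1)$ as ``$v$ selects one of its non-distinguished incident edges'' (weight $q^2$). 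Together with $M$, whose edges I declare bidirected, this data is exactly a vertex orientation $\sO$ of $T$. The distinguished edge at each vertex is fixed once and for all by a lexicographic reference rule, chosen so that every edge of $T$ is distinguished at exactly one of its two endpoints; the statistic $\law(\sO)$ then counts the vertices that select their non-distinguished edge. With this rule, a bidirected edge contributes one factor of $q^2$ whether it lies in $M$ or is produced by the diagonal expansion, so the total exponent of $q^2$ attached to a pair $(\sO,M)$ equals $2\law(\sO)$ independently of $M$. Hence an orientation $\sO$ with exactly $i$ bidirected edges arises from precisely $\binom{i}{j}$ choices of a size-$j$ submatching $M$, each carrying weight $q^{2\law(\sO)}$, and setting $a_i(T,q)=\sum_{\sO:\,\bd(\sO)=i} q^{2\law(\sO)}$ yields the identity above.

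The main obstacle is the choice and verification of the reference rule underlying $\law$: one must show that a lexicographic ``away'' rule makes every edge distinguished at exactly one endpoint, and that this is precisely what decouples the $q^2$-exponent from the choice of submatching $M$, so that $a_i(T,q)$ is well defined (a polynomial in $q^2$ with non-negative integer coefficients, hence lying in $\RR^+[q^2]$). Setting $q=1$ collapses $q^{2\law(\sO)}$ to $1$, recovering $a_i(T,1)=a_i(T)$ and Theorem \ref{thm:positive_immanant}, which provides a useful consistency check. The remaining steps --- that only involutions survive, and that the binomial bookkeeping matches $\alpha_{i,\lambda}$ --- are formal once the $M$-independence of the weight is in hand.
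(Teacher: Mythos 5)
Your opening steps are correct and are indeed the standard first half of this argument (which the present paper only cites from \cite{mukesh-siva-hook}, so the comparison is against that construction): on a tree the only permutations surviving in \eqref{eqn:immanant} are involutions whose $2$-cycles form a matching $M$, giving
\[
d_{\lambda}(\sL_T^q)=\sum_{M}\chi_{\lambda}(|M|)\,q^{2|M|}\prod_{v\notin V(M)}\bigl(1+q^2(d_v-1)\bigr),
\]
and it then suffices to prove your binomial identity $\sum_{M:|M|=j}q^{2j}\prod_{v\notin V(M)}\bigl(1+q^2(d_v-1)\bigr)=\sum_{i\geq j}\binom{i}{j}a_i(T,q)$. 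The gap is in the step you yourself flagged as ``the main obstacle'': the reference rule you need does not exist. Matching the diagonal factor $1+q^2(d_v-1)$ forces \emph{every} vertex to have exactly one distinguished (weight-$1$) incident edge, i.e.\ $n$ distinguished vertex--edge incidences; your requirement that every edge be distinguished at exactly one endpoint accounts for only $n-1$ incidences. By pigeonhole some edge of $T$ is distinguished at both endpoints, and for that edge the weight of a pair $(\sO,M)$ genuinely depends on whether the edge lies in $M$ ($q^2$ if matched, $1\cdot 1=1$ if bidirected but unmatched). The claimed $M$-independence of the $q^2$-exponent, and with it the clean $\binom{i}{j}$ decoupling, therefore fails.

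Worse, no repair of the rule can save the formula $a_i(T,q)=\sum_{\sO:\,\bd(\sO)=i}q^{2\law(\sO)}$ as you state it. Take $\lambda=1^n$ (the sign character), so $\chi_{\lambda}(j)=(-1)^j$ and $\alpha_{i,(1^n)}=\sum_{j=0}^{i}\binom{i}{j}(-1)^j$ vanishes for $i\geq 1$ and equals $1$ for $i=0$; the theorem then forces $a_0(T,q)=d_{(1^n)}(\sL_T^q)=\det(\sL_T^q)$, which for the one-edge tree is
\[
\det\begin{pmatrix}1 & -q\\ -q & 1\end{pmatrix}=1-q^2,
\]
(and the same value $1-q^2$ results for other small trees). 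But the same $n$-versus-$(n-1)$ pigeonhole shows every vertex orientation of a tree has at least one bidirected edge, so your $a_0(T,q)$ is an empty sum, equal to $0$; and in any case no nonnegative sum of monomials $q^{2\law(\sO)}$ can equal $1-q^2$. Your $q=1$ consistency check passes only because $1-q^2$ vanishes at $q=1$, so it cannot detect this. The actual statistic $\law(\cdot)$ and objects in \cite{mukesh-siva-hook} must therefore be subtler than a static distinguished-edge rule with weights $q^{2\law(\sO)}$ (some signed or exceptional contribution is unavoidable, as the $a_0$ computation shows), and that construction --- the entire content of the theorem beyond your correct formal reductions --- is missing from the proposal.
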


As mentioned earlier, setting $q=1$ in $\sL_T^q$ gives $L_T$ and 
clearly from Theorem \ref{thm:mukesh-siva-positive_immanant},
setting $q=1$ in $a_i(T,q)$ gives $a_i(T)$ (see \cite{mukesh-siva-hook} for the  definitions) 
for all $i$ with $0\leq i \leq \nhalf$.    
Further, since $a_i(T,q)$ is a polynomial
in $q^2$, Theorem   \ref{thm:mukesh-siva-positive_immanant} holds for
all $q \in \RR$.  Extensions of this result to the $q,t$-Laplacian denoted as $\sL_T^{q,t}$ are 
also given in \cite{mukesh-siva-hook} and a counterpart of Theorem
\ref{thm:mukesh-siva-positive_immanant} holds even when
$q,t \in \RR$ with $qt > 0$ and when $q,t \in \CC$ with $qt > 0$.
Later, in \cite{mukesh-siva-gts}, the following more general result about the 
coefficient of $(-1)^rx^{n-r}$ in $d_{\lambda}(xI-\sL_T^q)$  was proved.

\begin{lemma}[Nagar and Sivasubramanian]
	\label{lem:coeff_imm_poly}
	Let $T$ be a tree on $n$ vertices with $q$-Laplacian $\sL_T^q$. 
	For $\lambda \vdash n$, let 
	$d_{\lambda}(xI-\sL_T^q)=\sum_{r=0}^n (-1)^r 
	c_{\lambda,r}^{\sL_{T}^q}(q) x^{n-r}$. Then for $0\leq r \leq n$, we have 
	$c_{\lambda,r}^{\sL_{T}^q}(q)
	=\sum_{i=0}^{\rhalf}\alpha_{i,\lambda}^{}a_{i,r}^{}(T,q)$, 
	where $a_{i,r}^{}(T,q)$ is a polynomial in $q^2$, counting vertex 
orientations on all vertex subsets $B$ having exactly $r$ vertices in $T$ and with exactly  
$i$  bidirected edges with respect to a
	statistic $\awy_B^T(\cdot)$.
\end{lemma}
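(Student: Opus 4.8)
The plan is to compute the coefficient of $x^{n-r}$ in $d_\lambda(xI-\sL_T^q)$ directly from the defining sum $\sum_{\psi\in\SSS_n}\chi_\lambda(\psi)\prod_{i}(xI-\sL_T^q)_{i,\psi(i)}$, and then to localize the global combinatorial identity of Theorem \ref{thm:mukesh-siva-positive_immanant} to vertex subsets. Writing $M=xI-\sL_T^q$, the variable $x$ appears only in the diagonal entries $M_{i,i}=x-(\sL_T^q)_{i,i}$, and only at fixed points of $\psi$. First I would expand each such diagonal factor and extract the coefficient of $x^{n-r}$: this amounts to choosing the $n-r$ fixed points that supply the factor $x$ and letting $B$ denote the complementary set of $r$ positions. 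A contributing $\psi$ must then fix $[n]\setminus B$ and restrict to a permutation $\sigma$ of $B$; writing $\widehat{\sigma}$ for $\sigma$ extended by the identity on $[n]\setminus B$, this yields
\[
[x^{n-r}]\,d_\lambda(xI-\sL_T^q)=(-1)^r\sum_{|B|=r}\ \sum_{\sigma\in\SSS_B}\chi_\lambda(\widehat{\sigma})\prod_{i\in B}(\sL_T^q)_{i,\sigma(i)},
\]
the factor $(-1)^r$ arising because each of the $r$ matrix entries indexed by $B$ carries a minus sign relative to $\sL_T^q$. Comparing with the definition of $c_{\lambda,r}^{\sL_T^q}(q)$ cancels this sign.

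Next I would use that $T$ is acyclic. A term is nonzero only if $\sigma$ is supported on edges of $T$, and since a tree contains no cycle of length at least three, the nontrivial cycles of $\sigma$ must be disjoint transpositions along tree edges, that is, a partial matching $M$ of $T$ with $V(M)\subseteq B$. If $|M|=j$ then $\widehat{\sigma}$ has cycle type $2^j,1^{n-2j}$, so $\chi_\lambda(\widehat{\sigma})=\chi_\lambda(j)$. Evaluating the matrix product, each matched edge contributes $(\sL_T^q)_{i,j}(\sL_T^q)_{j,i}=q^2$ and each unmatched vertex $i\in B$ contributes its diagonal entry $1+q^2(d_i-1)$, with $d_i$ the degree of $i$ in the full tree $T$. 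Hence
\[
c_{\lambda,r}^{\sL_T^q}(q)=\sum_{|B|=r}\ \sum_{M\subseteq E(T[B])}\chi_\lambda(|M|)\,q^{2|M|}\prod_{i\in B\setminus V(M)}\bigl(1+q^2(d_i-1)\bigr),
\]
where $E(T[B])$ denotes the tree edges with both endpoints in $B$.

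The remaining task mirrors the proof of Theorem \ref{thm:mukesh-siva-positive_immanant}, now carried out relative to the fixed vertex set $B$. I would expand each diagonal factor $1+q^2(d_i-1)$ as a sum over the choice of an edge incident to $i$ (the orientation of the vertex $i$) against its constant term, so that the whole expression becomes a weighted sum over vertex orientations supported on $B$. Grouping each partial matching $M$ with the orientations whose set of bidirected edges contains $M$ produces, for an orientation with exactly $i$ bidirected edges, the coefficient $\sum_{j=0}^{i}\binom{i}{j}\chi_\lambda(j)=\alpha_{i,\lambda}$. Recording the $q$-weight of each orientation by the statistic $\awy_B^T(\cdot)$, the analogue restricted to $B$ of the statistic $\law(\cdot)$, and collecting over all size-$r$ subsets $B$ the orientations with exactly $i$ bidirected edges into $a_{i,r}(T,q)$, I obtain $c_{\lambda,r}^{\sL_T^q}(q)=\sum_{i=0}^{\rhalf}\alpha_{i,\lambda}\,a_{i,r}(T,q)$. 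The upper limit is $\floor{r/2}$ because at most $\floor{r/2}$ disjoint transpositions fit inside an $r$-element set, and $a_{i,r}(T,q)\in\RR^+[q^2]$ since every matched edge and every oriented vertex contributes a factor that is a nonnegative polynomial in $q^2$.

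The main obstacle is the bookkeeping in this last orientation expansion. In the restricted setting one must verify that grouping each partial matching with the orientations that bidirect it reproduces exactly the binomial coefficient $\binom{i}{j}$, and one must define $\awy_B^T(\cdot)$ so that the $q$-weights are assigned consistently across different subsets $B$. The acyclicity of $T$ and the fact that each $d_i$ is the \emph{global} tree degree rather than the degree within $B$ are precisely what make the identity $\sum_{j}\binom{i}{j}\chi_\lambda(j)=\alpha_{i,\lambda}$ reappear verbatim; the care needed is purely in transporting the known global statistic of Theorem \ref{thm:mukesh-siva-positive_immanant} to orientations confined to the vertex subset $B$.
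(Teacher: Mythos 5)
Your proposal is correct and follows essentially the same route as the original argument in \cite{mukesh-siva-gts} (which this paper only cites, without reproducing the proof): extract the coefficient of $x^{n-r}$ as a sum over $r$-subsets $B$, use acyclicity of $T$ to reduce the inner sum to partial matchings in $T[B]$ weighted by $q^{2|M|}$ and global-degree diagonal factors, and regroup via the orientation expansion so that $\sum_{j}\binom{i}{j}\chi_{\lambda}^{}(j)=\alpha_{i,\lambda}$ emerges, exactly localizing the proof of Theorem \ref{thm:mukesh-siva-positive_immanant} to the subset $B$. Your deferral of the weight bookkeeping to the statistic $\awy_B^T(\cdot)$ mirrors how the cited proof organizes it, so nothing essential is missing beyond that imported definition.
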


In Lemma \ref{lem:coeff_imm_poly}, setting $r=n$ in $ a_{i,r}^{}(T,q)$ clearly 
gives $ a_{i}^{}(T,q)$  for all $i$ (see \cite{mukesh-siva-gts} for the  definition of $ a_{i,r}^{}(T,q)$). 
Thus, Lemma \ref{lem:coeff_imm_poly} is a generalization of Theorem \ref{thm:mukesh-siva-positive_immanant}. 
Using Lemma \ref{lem:coeff_imm_poly}, the following
result  about monotonicity for the coefficients of all
immanantal polynomials of $\sL_{T}^q$ along $\GTS_n$ was proved in \cite[Theorem 1]{mukesh-siva-gts}.

\begin{theorem}[Nagar and Sivasubramanian]
\label{thm:main_earlier}
Let $T_1$ and $T_2$ be two trees with $n$ vertices such that $T_2$ covers $T_1$ 
in $\GTS_n$.  Let $\sL_{T_1}^q$ and $\sL_{T_2}^q$ be the $q$-Laplacians 
of $T_1$ and $T_2$ respectively.  For $\lambda \vdash n$, let 
\begin{eqnarray*}
  \zeta^{\sL_{T_1}^q}_{s_{\lambda}}(x) & = & d_{\lambda}(xI -\sL_{T_1}^q) = \sum_{r=0}^n (-1)^r 
c_{\lambda,r}^{\sL_{T_1}^q}(q) x^{n-r} \mbox{ and} \\
\zeta^{\sL_{T_2}^q}_{s_{\lambda}}(x) & = & d_{\lambda}(xI -\sL_{T_2}^q) = \sum_{r=0}^n (-1)^r 
c_{\lambda,r}^{\sL_{T_2}^q}(q) x^{n-r}.
\end{eqnarray*}
Then for all $\lambda \vdash n$, we have 
$c_{\lambda,r}^{\sL_{T_1}^q}(q) - c_{\lambda,r}^{\sL_{T_2}^q}(q) 
\in \RR^+[q^2]$, where $r=0,1,\ldots,n$.   
\end{theorem}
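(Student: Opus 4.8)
The plan is to reduce the monomial statement to the combinatorial expansion already available for immanants. Two ingredients make this possible: the tree structure forces only permutations of cycle type $2^j,1^{n-2j}$ to contribute, and Lemma \ref{lem:frob_inv_monomial} pins down the binomial sums $\alpha_i(m_\lambda)$ exactly. First I would record the structural observation that drives everything. For $i\neq j$ the entry $(xI-\sL_T^q)_{i,j}$ equals $qA_{i,j}$, nonzero only when $\{i,j\}$ is an edge of $T$. Hence a permutation $\psi\in\SSS_n$ gives a nonzero product $\prod_i (xI-\sL_T^q)_{i,\psi(i)}$ only if every nontrivial cycle of $\psi$ traces a cycle in $T$; since $T$ is acyclic the only such cycles are transpositions along edges, so $\psi$ has cycle type $2^j,1^{n-2j}$ and its transpositions form a partial matching of $T$. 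Consequently $d_{\gamma}(xI-\sL_T^q)$ depends on the class function $\Gamma_{\gamma}$ only through its values $\Gamma_{\gamma}(j)$ at cycle types $2^j,1^{n-2j}$. Because the derivation of Lemma \ref{lem:coeff_imm_poly} uses the character $\chi_{\lambda}$ solely through the values $\chi_{\lambda}(j)$, the identical bookkeeping (the binomial coefficient $\binom{i}{j}$ recording how the $i$ bidirected edges of an orientation are realized as transpositions) goes through verbatim after replacing $\chi_{\lambda}(j)$ by $\Gamma_{m_\lambda}(j)$. This yields the monomial analogue of Lemma \ref{lem:coeff_imm_poly},
\[
c_{m_\lambda,r}^{\sL_T^q}(q) = \sum_{i=0}^{\rhalf} \alpha_i(m_\lambda)\, a_{i,r}(T,q),
\]
with $\alpha_i(m_\lambda)$ as in \eqref{eqn:main} and the same non-negative orientation counts $a_{i,r}(T,q)\in\RR^+[q^2]$.

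Next I would feed Lemma \ref{lem:frob_inv_monomial} into this formula. That lemma gives $\alpha_i(m_\lambda)=2^i$ when $\lambda=2^i,1^{n-2i}$ and $\alpha_i(m_\lambda)=0$ otherwise. Thus if $\lambda$ is not of the form $2^k,1^{n-2k}$, every summand vanishes and $c_{m_\lambda,r}^{\sL_T^q}(q)=0$ for all $r$, so $d_{m_\lambda}(xI-\sL_T^q)=0$ for both trees; this is exactly the final ``Further'' clause. If instead $\lambda=2^k,1^{n-2k}$, then only the $i=k$ term survives (and only when $k\leq\rhalf$), collapsing the formula to the single term
\[
c_{m_\lambda,r}^{\sL_T^q}(q) = 2^k\, a_{k,r}(T,q).
\]

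Subtracting the two trees then gives
\[
c_{m_\lambda,r}^{\sL_{T_1}^q}(q) - c_{m_\lambda,r}^{\sL_{T_2}^q}(q) = 2^k\bigl(a_{k,r}(T_1,q) - a_{k,r}(T_2,q)\bigr),
\]
so the theorem reduces to showing $a_{k,r}(T_1,q)-a_{k,r}(T_2,q)\in\RR^+[q^2]$ whenever $T_2$ covers $T_1$. I expect this to be the main obstacle, and the point at which the genuine combinatorics enters: unlike the immanant result, where one only needs the \emph{weighted} sum $\sum_i \alpha_{i,\lambda}\bigl(a_{i,r}(T_1,q)-a_{i,r}(T_2,q)\bigr)$ to be non-negative, here a single orientation-count difference must itself lie in $\RR^+[q^2]$. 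One cannot extract this from Theorem \ref{thm:main_earlier} alone by inverting the triangular system $(\alpha_{i,2^k1^{n-2k}})$ coming from near-hook Schur partitions, since forward substitution in that system introduces subtractions that can destroy membership in $\RR^+[q^2]$.

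To clear this obstacle I would appeal to the per-index monotonicity of the orientation counts, namely $a_{i,r}(T_1,q)-a_{i,r}(T_2,q)\in\RR^+[q^2]$ for every $i$ and $r$, which is established combinatorially in \cite{mukesh-siva-gts} and is in fact what yields Theorem \ref{thm:main_earlier} (upon weighting by the non-negative $\alpha_{i,\lambda}$ of Lemma \ref{lem:chan_lam_bino_char}). Concretely, the covering relation in $\GTS_n$ supplies, for each fixed $i$ and $r$, an injection from the vertex orientations counted by $a_{i,r}(T_2,q)$ into those counted by $a_{i,r}(T_1,q)$ that does not increase the statistic $\awy_B^{T}(\cdot)$, forcing the difference to be a polynomial in $q^2$ with non-negative coefficients. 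Multiplying this difference by the positive constant $2^k$ then completes the proof of the asserted membership $c_{m_\lambda,r}^{\sL_{T_1}^q}(q)-c_{m_\lambda,r}^{\sL_{T_2}^q}(q)\in\RR^+[q^2]$ for all $r=0,1,\ldots,n$.
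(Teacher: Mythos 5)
Your proposal proves the wrong statement: it establishes Theorem \ref{thm:main} (the monomial GMF version) rather than Theorem \ref{thm:main_earlier}, which asserts monotonicity of the coefficients of the immanantal polynomial $d_{\lambda}(xI-\sL_T^q)=d_{s_{\lambda}}(xI-\sL_T^q)$ for \emph{every} $\lambda \vdash n$. Your use of Lemma \ref{lem:frob_inv_monomial} brings in $\alpha_i(m_{\lambda})$, which is supported only on shapes $\lambda = 2^k,1^{n-2k}$, and your resulting collapse to the single term $2^k a_{k,r}(T_j,q)$ — together with the vanishing claim for all other $\lambda$ — is false in the Schur case: for instance $\lambda=(n)$ gives the permanent, and $d_{(n)}(xI-\sL_T^q)\neq 0$ even though $(n)$ is not of the form $2^k,1^{n-2k}$ when $n \geq 3$. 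The ``Further'' clause you verify belongs to Theorem \ref{thm:main}; Theorem \ref{thm:main_earlier} has no such clause precisely because immanants do not vanish off near-matching shapes. (Your opening structural observation — that on a tree only permutations of cycle type $2^j,1^{n-2j}$ whose transpositions form a partial matching contribute — is correct, but it does not make $\alpha_{i,\lambda}$ degenerate the way $\alpha_i(m_{\lambda})$ does.)

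The good news is that your skeleton is exactly the right one, and only one ingredient needs replacing. Keep Lemma \ref{lem:coeff_imm_poly}, which gives $c_{\lambda,r}^{\sL_{T_j}^q}(q)=\sum_{i=0}^{\rhalf}\alpha_{i,\lambda}\,a_{i,r}(T_j,q)$, and keep the per-index monotonicity $a_{i,r}(T_1,q)-a_{i,r}(T_2,q)\in\RR^+[q^2]$ of Lemma \ref{lem:diff_a_k,r(T,q)}, which you correctly identified as the combinatorial heart (the injection between orientation classes along a $\GTS_n$ cover). But in place of Lemma \ref{lem:frob_inv_monomial} you need Chan and Lam's Lemma \ref{lem:chan_lam_bino_char}: $\alpha_{i,\lambda}\geq 0$ for all $\lambda \vdash n$ and all $i$. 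Then
\[
c_{\lambda,r}^{\sL_{T_1}^q}(q)-c_{\lambda,r}^{\sL_{T_2}^q}(q)
=\sum_{i=0}^{\rhalf}\alpha_{i,\lambda}\bigl(a_{i,r}(T_1,q)-a_{i,r}(T_2,q)\bigr)
\]
is a non-negative combination of elements of $\RR^+[q^2]$ and hence lies in $\RR^+[q^2]$ — this is precisely the paper's route. Note that unlike the monomial case the sum does not collapse to a single index, so the per-index positivity of Lemma \ref{lem:diff_a_k,r(T,q)} is genuinely needed at every $i$, weighted by the non-negative $\alpha_{i,\lambda}$; as you yourself observed, positivity of a weighted sum cannot be unwound into per-index positivity, which is why the correct weighting lemma matters. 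Had the target been Theorem \ref{thm:main}, your argument would essentially coincide with the paper's proof of that result.
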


The proof of Theorem \ref{thm:main_earlier} involved giving a combinatorial expression for 
the coefficients $c_{\lambda,r}^{\sL_{T_1}^q}(q)$ and $c_{\lambda,r}^{\sL_{T_2}^q}(q)$ (see Lemma \ref{lem:coeff_imm_poly}) and then giving 
an injection between the objects counting $c_{\lambda,r}^{\sL_{T_2}^q}(q)$
and those counting $c_{\lambda,r}^{\sL_{T_1}^q}(q)$ (see Lemma  \ref{lem:diff_a_k,r(T,q)}).

\section{GMF of $q$-Laplacians arising from symmetric functions}
\label{sec:prelims}

Let $\Gamma_{\gamma}=\ch^{-1}(\gamma)$ be the inverse Frobenius
image of the symmetric function $\gamma \in \Lambda_{\QQ}^n$. 
Note that $\Gamma_{\gamma}$ is a class function of $\SSS_n$ over $\CC$ indexed by $\gamma$. 
Recall \eqref{eqn:gen_matrix_fn}, the generalized matrix function of the matrix $\sL_T^q=(l_{i,j}^q)_{1\leq i,j \leq n }$ with respect
to the symmetric function $\gamma$ is defined as 

\begin{equation}
  \label{eqn:gen_matrix_fn_sL}
  d_{\gamma}(\sL_T^q) = \sum_{\psi \in \SSS_n} \Gamma_{\gamma}(\psi) \prod_{i=1}^n l_{i,\psi(i)}^q.
\end{equation}

For the matrix $\sL_T^q$, it is very easy to show the following
counterpart of Theorem \ref{thm:mukesh-siva-positive_immanant}.  Since
the proof is a verbatim copy, we omit it.

\begin{theorem}
  \label{thm:mukesh-siva-gen_matrix_fn}
 Let $\sL_T^q$ be the $q$-Laplacian matrix of a tree $T$ on $n$ vertices. 
 Then, for all $\gamma \in \Lambda_{\QQ}^n$, 
 $d_{\gamma}(\sL_T^q) = \sum_{i=0}^{\nhalf} \alpha_i (\gamma) a_i(T,q)$.
\end{theorem}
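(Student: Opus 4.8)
The plan is to mirror the proof of Theorem \ref{thm:mukesh-siva-positive_immanant}, observing that the only property of the irreducible character $\chi_{\lambda}^{}$ used there is its value on permutations of cycle type $2^j,1^{n-2j}$; replacing those values by $\Gamma_{\gamma}(j)$ costs nothing. First I would examine the sum $d_{\gamma}(\sL_T^q) = \sum_{\psi \in \SSS_n} \Gamma_{\gamma}(\psi)\prod_{i=1}^n l_{i,\psi(i)}^q$ and note that a summand is nonzero only when, for each $i$, either $\psi(i)=i$ or $\{i,\psi(i)\}$ is an edge of $T$. Because $T$ is acyclic, its only closed walks reusing no edge are back-and-forth traversals of a single edge, so every contributing $\psi$ is an involution whose nontrivial cycles are transpositions along a matching of $T$. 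Hence each contributing $\psi$ has cycle type $2^j,1^{n-2j}$ for some $j$ with $0 \le j \le \nhalf$, and $\Gamma_{\gamma}(\psi)$ depends only on $j$, equalling $\Gamma_{\gamma}(j)$.

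Next I would group the surviving summands by the matching $M \subseteq E(T)$ carried by the transpositions of $\psi$. Writing $j = |M|$, the contribution of $M$ factors into a product of diagonal entries $l_{i,i}^q$ over the vertices missed by $M$ and a product of the off-diagonal pairs $l_{a,b}^q\, l_{b,a}^q$ over the edges $\{a,b\} \in M$. This yields $d_{\gamma}(\sL_T^q) = \sum_{j=0}^{\nhalf} \Gamma_{\gamma}(j)\, W_j(T,q)$, where $W_j(T,q)$ is the total weight of matchings of size $j$. This is precisely the intermediate expression appearing in the proof of Theorem \ref{thm:mukesh-siva-positive_immanant}, with $\chi_{\lambda}^{}(j)$ replaced by $\Gamma_{\gamma}(j)$; since the weights $W_j(T,q)$ are determined entirely by the matrix $\sL_T^q$ and not by $\gamma$, I may import that computation unchanged.

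The combinatorial reorganization of the $W_j(T,q)$ into orientation counts is then the step to transcribe. Expanding each diagonal term $l_{i,i}^q = 1 + q^2(\deg_T(i)-1)$ according to the edges incident to $i$ and bookkeeping the resulting directions produces the vertex orientations of $T$, graded by their number of bidirected edges and weighted by the statistic $\law(\cdot)$. A vertex orientation with exactly $i$ bidirected edges contains a size-$j$ sub-matching among those $i$ edges in exactly $\binom{i}{j}$ ways, so collecting the coefficient of $a_i(T,q)$ across all $j \le i$ gives $\sum_{j=0}^{i}\binom{i}{j}\Gamma_{\gamma}(j) = \alpha_i(\gamma)$. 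Summing over $i$ from $0$ to $\nhalf$ delivers $d_{\gamma}(\sL_T^q) = \sum_{i=0}^{\nhalf}\alpha_i(\gamma)\, a_i(T,q)$, as claimed.

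The one point demanding care is the assertion that this reorganization is genuinely independent of $\gamma$: the appearance of the binomial coefficient $\binom{i}{j}$ is a purely matching-to-orientation counting fact about the tree $T$, so the class function enters only as the scalar weights $\Gamma_{\gamma}(j)$ summed against these fixed binomial counts. I expect the main obstacle to be simply verifying that no step of the original orientation construction silently exploited that $\chi_{\lambda}^{}$ is a character (through, say, column orthogonality or a representation-theoretic identity) rather than merely its class-function values on involutions; once that is confirmed, linearity of the whole construction in $\Gamma_{\gamma}$ finishes the argument, which is exactly why the authors can describe the proof as a verbatim copy.
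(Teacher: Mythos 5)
Your proposal is correct and follows exactly the route the paper intends: the paper omits the proof precisely because it is a verbatim copy of the proof of Theorem \ref{thm:mukesh-siva-positive_immanant}, and your reconstruction (contributing permutations are matchings of $T$, so only the values $\Gamma_{\gamma}(j)$ on cycle types $2^j,1^{n-2j}$ enter, and the matching-to-orientation regrouping with the $\binom{i}{j}$ count is independent of the class function) is that argument with $\chi_{\lambda}^{}(j)$ replaced by $\Gamma_{\gamma}(j)$. Your closing observation that the original construction uses only the class-function values on involutions, and never any character-theoretic identity, is exactly the point that justifies the paper's one-line dismissal.
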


It is very simple to show the following
counterpart of Lemma \ref{lem:coeff_imm_poly}.  Since
the proof is identical, we omit it and merely state the result.

\begin{lemma}
	\label{lem:coeff_gmf_poly}
	Let $T$ be a tree on $n$ vertices with $q$-Laplacian $\sL_T^q$. 
	Then for all $\gamma \in \Lambda_{\QQ}^n$, we have 
	$d_{\gamma}(xI-\sL_T^q)=\sum_{r=0}^n (-1)^r 
	c_{\gamma,r}^{\sL_{T}^q}(q) x^{n-r}$, where 
	$c_{\gamma,r}^{\sL_{T}^q}(q)
	=\sum_{i=0}^{\rhalf}\alpha_{i}(\gamma)a_{i,r}^{}(T,q)$ for all $r=0,1,\ldots,n$.
\end{lemma}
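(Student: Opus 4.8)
The plan is to use the fact that every object in the claimed identity depends \emph{linearly} on the symmetric function $\gamma$, and then to reduce to the already-established Schur case, Lemma \ref{lem:coeff_imm_poly}. Since the Schur functions $\{s_{\lambda}\}_{\lambda \vdash n}$ form a basis of $\Lambda_{\QQ}^n$, I would first expand $\gamma = \sum_{\lambda \vdash n} a_{\lambda} s_{\lambda}$ with $a_{\lambda} \in \QQ$. The Frobenius characteristic $\ch$ is a linear isometry, so $\ch^{-1}$ is linear; combined with the stated fact $\ch^{-1}(s_{\lambda}) = \chi_{\lambda}^{}$, this gives the class-function expansion $\Gamma_{\gamma} = \ch^{-1}(\gamma) = \sum_{\lambda} a_{\lambda}\, \chi_{\lambda}^{}$.

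Next I would record the two linearity consequences that drive the whole argument. First, feeding $\Gamma_{\gamma} = \sum_{\lambda} a_{\lambda}\chi_{\lambda}^{}$ into the defining sum \eqref{eqn:gen_matrix_fn_sL} and interchanging the finite sum over $\lambda$ with the sum over $\psi \in \SSS_n$ shows that the GMF is linear in $\gamma$, namely $d_{\gamma}(A) = \sum_{\lambda} a_{\lambda}\, d_{s_{\lambda}}(A) = \sum_{\lambda} a_{\lambda}\, d_{\lambda}(A)$ for every $n\times n$ matrix $A$, using $d_{s_{\lambda}}(A) = d_{\lambda}(A)$. Second, applying the same expansion of $\Gamma_{\gamma}$ inside the definition \eqref{eqn:main} of $\alpha_i(\gamma)$ and comparing with \eqref{eqn:main_char} yields $\alpha_i(\gamma) = \sum_{\lambda} a_{\lambda}\, \alpha_{i,\lambda}$ for each $i$. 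Both identities are immediate once one interchanges the (finite) sum over $\lambda$ with the sum over $\psi$, respectively over $j$.

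The proof then assembles term by term. Taking $A = xI - \sL_T^q$ in the first linearity identity and invoking Lemma \ref{lem:coeff_imm_poly} for each Schur index gives $d_{\gamma}(xI - \sL_T^q) = \sum_{\lambda} a_{\lambda} \sum_{r=0}^n (-1)^r c_{\lambda,r}^{\sL_T^q}(q)\, x^{n-r}$. Extracting the coefficient of $(-1)^r x^{n-r}$ yields $c_{\gamma,r}^{\sL_T^q}(q) = \sum_{\lambda} a_{\lambda}\, c_{\lambda,r}^{\sL_T^q}(q)$, and substituting the Schur-case formula $c_{\lambda,r}^{\sL_T^q}(q) = \sum_{i=0}^{\rhalf} \alpha_{i,\lambda}\, a_{i,r}(T,q)$ and interchanging the sums over $\lambda$ and $i$ produces $c_{\gamma,r}^{\sL_T^q}(q) = \sum_{i=0}^{\rhalf} \bigl(\sum_{\lambda} a_{\lambda}\, \alpha_{i,\lambda}\bigr) a_{i,r}(T,q) = \sum_{i=0}^{\rhalf} \alpha_i(\gamma)\, a_{i,r}(T,q)$, where the last equality is the second linearity identity. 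This is exactly the asserted expression.

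There is no serious obstacle: the argument is a genuine linear extension of the Schur case, which is why the authors are content to omit it (alternatively, one may simply rerun the combinatorial proof of Lemma \ref{lem:coeff_imm_poly} verbatim with $\chi_{\lambda}^{}$ replaced by $\Gamma_{\gamma}$ and $\alpha_{i,\lambda}$ by $\alpha_i(\gamma)$). The one point that deserves care is the bookkeeping at the final interchange: one must confirm that the combinatorial weights $a_{i,r}(T,q)$ from Lemma \ref{lem:coeff_imm_poly} depend only on $T$, the indices $i,r$, and $q$, and \emph{not} on $\lambda$. This independence is what licenses pulling $a_{i,r}(T,q)$ outside the sum over $\lambda$ and collecting all $\lambda$-dependence into $\alpha_i(\gamma)$; it holds because those weights count vertex orientations via the statistic $\awy_B^T(\cdot)$ with no reference to the symmetric function being used.
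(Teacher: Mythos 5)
Your argument is correct, but it is not the paper's route. The paper omits the proof of Lemma \ref{lem:coeff_gmf_poly} precisely because it is a verbatim rerun of the combinatorial proof of Lemma \ref{lem:coeff_imm_poly}: one replaces the irreducible character $\chi_{\lambda}^{}$ by the class function $\Gamma_{\gamma}$ throughout, and $\alpha_{i,\lambda}$ of \eqref{eqn:main_char} by $\alpha_i(\gamma)$ of \eqref{eqn:main}. That substitution works because the combinatorial side of that proof --- the orientation-counting polynomials $a_{i,r}(T,q)$ and the statistic $\awy_B^T(\cdot)$ --- interacts with the class function only through its values on permutations of cycle type $2^j,1^{n-2j}$, the only permutations that contribute a nonzero product when the matrix is built from a tree, so irreducibility of the character is never used. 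You instead treat Lemma \ref{lem:coeff_imm_poly} as a black box and extend it $\QQ$-linearly over the Schur basis, via linearity of $\ch^{-1}$, of $d_{(\cdot)}(xI-\sL_T^q)$ in the class function, and of $\alpha_i(\cdot)$; the one delicate point, that $a_{i,r}(T,q)$ depends only on $T$, $i$, $r$, $q$ and not on $\lambda$, so that it can be pulled out of the sum over $\lambda$, you flag and justify correctly. The trade-off: your reduction requires no access to the internals of the earlier combinatorial argument and is quicker to verify, while the paper's rerun is self-contained and applies directly to an arbitrary class function rather than only to elements of $\ch^{-1}(\Lambda_{\QQ}^n)$ --- though since the $\chi_{\lambda}^{}$ span the class functions, the two proofs have the same reach here. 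Your parenthetical alternative (rerunning the combinatorial proof verbatim with $\chi_{\lambda}^{}$ replaced by $\Gamma_{\gamma}$) is in fact exactly the proof the paper intends.
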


Every element $\gamma \in \Lambda_{\QQ}^n$ is written as a linear 
combinatation of basis vectors of $\Lambda_{\QQ}^n$.
The vector space $\Lambda_{\QQ}^n$ has six standard bases.
We use standard terminology to 
denote each of the usual bases of $\Lambda_{\QQ}^n$.  Thus,
$s_{\lambda}$, $p_{\lambda}$, $e_{\lambda}$, $h_{\lambda}$, 
$m_{\lambda}$ and $f_{\lambda}$  denote the
Schur,  power sum, elementary, homogenous, monomial 
and forgotten symmetric functions respectively. 
The inverse Frobenius map of each of these will be denoted by the
same letter, but in capital font and with the partition $\lambda$ 
as a superscript
rather than a subscript.  Thus, $E^{\lambda} = \ch^{-1}(e_{\lambda})$,
$\chi_{\lambda}^{} = S^{\lambda} = \ch^{-1}(s_{\lambda})$ and so on. 

As mentioned earlier, the inverse Frobenius
image $\ch^{-1}(s_{\lambda})$ of the Schur symmetric function $s_{\lambda}$ 
is $\chi_{\lambda}^{}$,
the irreducible character of $\SSS_n$ over $\CC$ indexed by $\lambda$.
Thus, if any symmetric function $\gamma \in \Lambda_{\QQ}^n$ is schur-positive
(that is, $\gamma = \sum_{\lambda \vdash n} a_{\lambda} s_{\lambda}$ where
$a_{\lambda} \in \RR^{+}$ for all $\lambda \vdash n$), 
then, by linearity, Lemma \ref{lem:chan_lam_bino_char} 
will be true with $\chi_{\lambda}^{}$ replaced by $\ch^{-1}(\gamma)$ in \eqref{eqn:main_char}.   
Unfortunately, the monomial symmetric function $m_{\lambda}$ 
indexed by $\lambda$ is not schur-positive.
Thus, if $M^{\lambda} = \ch^{-1}(m_{\lambda})$, it is not clear
that Lemma \ref{lem:chan_lam_bino_char} 
with $\chi_{\lambda}^{}$ replaced by $M^{\lambda}$ 
in \eqref{eqn:main_char} is true.  
Therefore the identity given in \eqref{eqn:main_char} is an special case 
of the identity given in \eqref{eqn:main} when $\gamma=s_{\lambda}$.

For $\gamma \in \Lambda_{\QQ}^n$, let $\Gamma_{\gamma} = \ch^{-1}(\gamma)$. 
In $\alpha_i^{}(\gamma)$, since we only need to evaluate  $\Gamma_{\gamma}(\psi)=\Gamma_{\gamma}(j)$ for 
a permutation $\psi \in \SSS_n$ with cycle type $2^j, 1^{n-2j}$.  
Thus, if $\Gamma_{\gamma}(j) \geq 0$
for all $j$, then from \eqref{eqn:main}, we get 
$\alpha_i(\gamma) \geq 0$.   Since the inverse Frobenius image $\Gamma_{p_{\lambda}}$ of
$p_{\lambda}$ is a scalar multiple of the indicator function of 
the conjugacy class $C_{\lambda}$ indexed by $\lambda$ (see \cite{EC2}), it follows
that $\alpha_i^{}(p_{\lambda}) \geq 0$ for all $\lambda \vdash n$ and for all $i=0,1,\ldots,\nhalf$.

As mentioned earlier, if $\gamma \in \Lambda_{\QQ}^n$
is schur-positive, then $\alpha_i^{}(\gamma) \geq 0$.  Since $h_{\lambda}$
is schur-positive (see \cite[Corollary 7.12.4]{EC2}), it follows from
Lemma \ref{lem:chan_lam_bino_char} that 
$\alpha_i^{}(h_{\lambda}) \geq 0$ for all $\lambda \vdash n$.  
Similarly, it is well known
that $e_{\lambda}$ is also schur-positive.  Thus 
$\alpha_i^{}(e_{\lambda}) \geq 0$ for all $\lambda \vdash n$.  
We record these facts below for future use.

\begin{lemma}
  \label{lem:p_h_e_sum}
For all $\lambda \vdash n$ and for $0 \leq i \leq \nhalf$, we assert that 
$\alpha_i^{}(f) \geq 0$ for 
$f \in \{p_{\lambda}, h_{\lambda}, e_{\lambda}  \}.$
\end{lemma}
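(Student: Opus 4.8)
The plan is to dispatch the three symmetric functions by two different positivity mechanisms, neither of which requires the delicate combinatorial analysis needed later for $m_{\lambda}$. For the power sum $p_{\lambda}$ I would exploit the fact that its inverse Frobenius image is already a non-negative class function, so every summand in $\alpha_i(p_{\lambda})$ is individually non-negative. For $h_{\lambda}$ and $e_{\lambda}$ I would instead use that both are Schur-positive, expand $\Gamma_{h_{\lambda}}$ and $\Gamma_{e_{\lambda}}$ as non-negative combinations of irreducible characters, and invoke the Chan--Lam bound (Lemma \ref{lem:chan_lam_bino_char}) term by term.

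For the power sum case, recall the standard formula $\ch(g) = \sum_{\mu \vdash n} z_\mu^{-1} g(C_\mu) p_\mu$ for a class function $g$, where $z_\mu = \prod_k k^{m_k} m_k!$ is the size of the centralizer of a permutation of cycle type $\mu$ (with $m_k$ the number of parts of $\mu$ equal to $k$); see \cite{EC2}. Inverting this identity gives $\Gamma_{p_\lambda} = \ch^{-1}(p_\lambda) = z_\lambda \bone_{C_\lambda}$, the class function taking the value $z_\lambda$ on the conjugacy class $C_\lambda$ and $0$ on every other class. Since $z_\lambda > 0$, we have $\Gamma_{p_\lambda}(\psi) \geq 0$ for every $\psi \in \SSS_n$, and in particular $\Gamma_{p_\lambda}(j) \geq 0$ for all $j$. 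From \eqref{eqn:main} it then follows that $\alpha_i(p_\lambda) = \sum_{j=0}^i \binom{i}{j}\Gamma_{p_\lambda}(j) \geq 0$, since every term of the sum is non-negative.

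For $h_\lambda$ and $e_\lambda$ I would first record their Schur expansions. The homogeneous function satisfies $h_\lambda = \sum_{\mu \vdash n} K_{\mu\lambda}\, s_\mu$ with non-negative Kostka coefficients $K_{\mu\lambda}$ (\cite[Corollary 7.12.4]{EC2}), and the elementary function factors as $e_\lambda = \prod_k e_{\lambda_k} = \prod_k s_{1^{\lambda_k}}$, a product of Schur functions, hence Schur-positive by the Littlewood--Richardson rule. In either case write $\gamma = \sum_{\mu \vdash n} a_\mu s_\mu$ with all $a_\mu \geq 0$. Because $\ch^{-1}$ is linear and $\ch^{-1}(s_\mu) = \chi_{\mu}^{}$, we get $\Gamma_\gamma = \sum_\mu a_\mu \chi_{\mu}^{}$, and substituting into \eqref{eqn:main} yields $\alpha_i(\gamma) = \sum_\mu a_\mu \alpha_{i,\mu}$, where $\alpha_{i,\mu}$ is the Chan--Lam quantity of \eqref{eqn:main_char}. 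By Lemma \ref{lem:chan_lam_bino_char} each $\alpha_{i,\mu}$ is a non-negative integral multiple of $2^i$, whence $\alpha_i(\gamma) \geq 0$ for $\gamma \in \{h_\lambda, e_\lambda\}$.

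There is no serious obstacle in this lemma; the only point worth flagging is precisely what goes wrong for the monomial (and forgotten) functions, which is what motivates the separate treatment in Lemma \ref{lem:frob_inv_monomial}. Power sums work because their inverse Frobenius images are manifestly non-negative even though $p_\lambda$ itself is not Schur-positive, while $h_\lambda$ and $e_\lambda$ work because they \emph{are} Schur-positive. The function $m_\lambda$ enjoys neither property, so $\Gamma_{m_\lambda}(j)$ can be negative for individual $j$, and one must instead show that the binomial-weighted sum in \eqref{eqn:main} stays non-negative regardless.
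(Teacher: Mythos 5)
Your proposal is correct and follows essentially the same route as the paper, which likewise disposes of $p_{\lambda}$ by noting that $\ch^{-1}(p_{\lambda})$ is a non-negative scalar multiple of the indicator function of the conjugacy class $C_{\lambda}$ (so every summand in \eqref{eqn:main} is non-negative), and of $h_{\lambda}$ and $e_{\lambda}$ by Schur-positivity, linearity of $\ch^{-1}$, and term-by-term appeal to Lemma \ref{lem:chan_lam_bino_char}. The extra details you supply (the explicit scalar $z_{\lambda}$, the Kostka expansion, and the Littlewood--Richardson justification for $e_{\lambda}$) simply spell out what the paper cites as well known.
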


With Lemma \ref{lem:p_h_e_sum} in place, it is not hard to show the following.

\begin{corollary}
  \label{cor:three}
Theorem \ref{thm:main_earlier} is true  when we replace
immanantal polynomial $d_{\lambda}(xI - \sL_{T_j}^q)$ by 
the generalized matrix polynomials   
$d_{f}(xI - \sL_{T_j}^q)$ for 
$f \in \{p_{\lambda}, h_{\lambda}, e_{\lambda}  \}$ and 
for $j=1,2$.
\end{corollary}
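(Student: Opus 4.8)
The plan is to reduce everything to the immanantal monotonicity already proved in Theorem~\ref{thm:main_earlier}, using the fact that all three generalized matrix polynomials are built from the \emph{same} tree statistics that appear in the immanantal case. For $f = h_{\lambda}$ and $f = e_{\lambda}$ this reduction is immediate by linearity: since $h_{\lambda}$ and $e_{\lambda}$ are Schur-positive, we may write $f = \sum_{\mu \vdash n} a_{\mu} s_{\mu}$ with all $a_{\mu} \ge 0$, whence $\ch^{-1}(f) = \sum_{\mu} a_{\mu} \chi_{\mu}^{}$ and, because the generalized matrix polynomial is linear in $\gamma$, $c_{f,r}^{\sL_{T_j}^q}(q) = \sum_{\mu} a_{\mu}\, c_{\mu,r}^{\sL_{T_j}^q}(q)$. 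Subtracting the $j=1$ and $j=2$ coefficients and applying Theorem~\ref{thm:main_earlier} termwise expresses the difference as a sum of non-negative multiples of elements of $\RR^+[q^2]$, which therefore lies in $\RR^+[q^2]$.

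The power-sum case $f = p_{\lambda}$ is not covered this way, since $p_{\lambda}$ is not Schur-positive, so here I would argue through the common combinatorial expansion. By Lemma~\ref{lem:coeff_gmf_poly}, for any $\gamma \in \Lambda_{\QQ}^n$ one has $c_{\gamma,r}^{\sL_{T}^q}(q) = \sum_{i=0}^{\rhalf} \alpha_i(\gamma)\, a_{i,r}(T,q)$, where the statistics $a_{i,r}(T,q)$ depend only on the tree, not on $\gamma$. Writing this for $\gamma = p_{\lambda}$ at both $T_1$ and $T_2$ and subtracting gives
\[
c_{p_{\lambda},r}^{\sL_{T_1}^q}(q) - c_{p_{\lambda},r}^{\sL_{T_2}^q}(q) = \sum_{i=0}^{\rhalf} \alpha_i(p_{\lambda})\,\bigl( a_{i,r}(T_1,q) - a_{i,r}(T_2,q) \bigr).
\]
Now Lemma~\ref{lem:p_h_e_sum} supplies $\alpha_i(p_{\lambda}) \ge 0$, while the injection constructed in the proof of Theorem~\ref{thm:main_earlier} (Lemma~\ref{lem:diff_a_k,r(T,q)}) supplies $a_{i,r}(T_1,q) - a_{i,r}(T_2,q) \in \RR^+[q^2]$ for each fixed $i$ and $r$. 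Since $\RR^+[q^2]$ is closed under addition and under scaling by a non-negative real, the displayed sum lies in $\RR^+[q^2]$, as required. In fact this second argument, fed by Lemma~\ref{lem:p_h_e_sum}, handles all three functions $p_{\lambda}, h_{\lambda}, e_{\lambda}$ uniformly, so one may present it once rather than splitting off $h_\lambda, e_\lambda$.

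I do not anticipate a genuine obstacle, consistent with the excerpt's statement that the result ``is not hard to show.'' The single point needing care is that the input from Theorem~\ref{thm:main_earlier} must be used at the level of the individual statistics: one needs $a_{i,r}(T_1,q) - a_{i,r}(T_2,q) \in \RR^+[q^2]$ for each $(i,r)$, not merely for the $\lambda$-weighted combination $\sum_i \alpha_{i,\lambda} a_{i,r}$. This holds because the underlying injection matches vertex orientations of $T_2$ to those of $T_1$ while preserving the subset size $r$ and the bidirected-edge count $i$ (and not lowering the $\law$-statistic that records the power of $q^2$); it is exactly this $\gamma$-free form of the injection that lets the passage to $m_{\lambda}$ in the main theorem, and to $p_{\lambda}, h_{\lambda}, e_{\lambda}$ here, go through.
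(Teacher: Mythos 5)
Your proposal is correct and follows essentially the route the paper intends: the paper leaves Corollary~\ref{cor:three} to the reader precisely because it is the uniform argument you describe, namely combining $\alpha_i(f)\geq 0$ from Lemma~\ref{lem:p_h_e_sum} with the $\gamma$-free expansion of Lemma~\ref{lem:coeff_gmf_poly} and the termwise positivity $a_{i,r}(T_1,q)-a_{i,r}(T_2,q)\in\RR^+[q^2]$ of Lemma~\ref{lem:diff_a_k,r(T,q)}. Your separate Schur-positivity reduction for $h_{\lambda}$ and $e_{\lambda}$ is a valid alternative but, as you note yourself, unnecessary once the uniform argument is in place.
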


By Theorem \ref{thm:main_earlier} and Corollary \ref{cor:three}, 
we thus have monotonicity results about four of the six standard bases,  
as we move up on $\GTS_n$.
This paper plugs the gaps left by considering the
generalized matrix polynomials of $\sL_T^q$ arising from the last two 
bases $m_{\lambda}$ and $f_{\lambda}$. In other words, 
we consider the cases when we replace
immanantal polynomial $d_{\lambda}(xI - \sL_{T_j}^q)$ by 
$d_{g}(xI - \sL_{T_j}^q)$ when 
$g \in \{m_{\lambda}, f_{\lambda} \}$  
in Theorem \ref{thm:main_earlier}.

\comment{
In this note, we assume that the degree of these symmetric functions 
and the number of variables are equal. 
For a positive integer $n$, there are six bases of 
$\Lambda_{\QQ}^n$ which are normally considered, namely  
$\{e_{\mu}\}_{\mu\vdash n}$ the sets of elementary symmetric functions, 
$\{m_{\mu}\}_{\mu\vdash n}$ the set of monomial symmetric functions, 
$\{p_{\mu}\}_{\mu\vdash n}$ the set of power sum symmetric functions, 
$\{h_{\mu}\}_{\mu\vdash n}$ the set of  complete homogeneous symmetric functions,  
$\{s_{\mu}\}_{\mu\vdash n}$ the set of Schur symmetric functions  and 
$\{f_{\mu}\}_{\mu\vdash n}^{}$ the set of forgotten symmetric functions in $n$ variables.  
We refer the reader to  
the book by Macdonald \cite{macdonald-book} as a 
reference for results containing these  symmetric 
functions that we use in this note. 
A combinatorial significance of these symmetric functions 
was given by Kulikauskas and Remmel 
\cite{kulikauskas-remmel-lyndon-words} when they are 
evaluated at the eigenvalues of a matrix.

Let $\SSS_n$ be the symmetric group of permutations of $[n]$. 
Let  $Z(\SSS_n)$ be the center of the group algebra of $\SSS_n$ over $\CC$. 
It is well known that there is an isometry between $Z(\SSS_n)$ 
and $\Lambda_{\QQ}^n$. 
Indeed, this isometry plays an important role between the 
combinatorics of the symmetric functions and $\SSS_n$ and 
tableaux, which have been so successfully developed in recent years.  
This isometry is defined via the Frobenius map or characteristic map 
$\ch:Z(\SSS_n) \rightarrow \Lambda_{\QQ}^n$, 
where 
\begin{equation}
\label{eq:def_ch_map}
\ch(g)=\sum\limits_{\mu \vdash n} 
\frac{1}{z_{\mu}}g(\mu)p_{\mu}, 
\mbox{ where } g \in Z(\SSS_n).
\end{equation}

It is well known that the 
map $\ch(.)$ is an isometric isomorphism of $Z(\SSS_n)$ 
onto $\Lambda_{\QQ}^n$, and we may speak of its inverse,  $\ch^{-1}(.)$. 
Therefore for all $ \gamma 
\in \Lambda_{\QQ}^n$,  $\ch^{-1}(\gamma) \in Z(\SSS_n)$.  
It is also well known that when $\gamma=s_{\mu}^{}  
\in \Lambda_{\QQ}^n$ for some $\mu \vdash n$,     
$\ch^{-1}(s_{\mu}^{})=\chi_{\mu}^{}$, 
where $\chi_{\mu}:\SSS_n\rightarrow \ZZ$ is an   
irreducible character of $\SSS_n$ indexed by $\mu$  
(for detailed discussion on the Frobenius 
characteristic map, see \cite{macdonald-book}).
This note discusses a few results which involve  
$\ch^{-1}(e_{\mu}^{})$, $\ch^{-1}(m_{\mu}^{})$, 
$\ch^{-1}(p_{\mu}^{})$, $\ch^{-1}(h_{\mu}^{})$, 
$\ch^{-1}(s_{\mu}^{})$  
and $\ch^{-1}(f_{\mu}^{})$. 
 For $ \gamma \in \Lambda_{\QQ}^n$, let 
$\ch^{-1}(\gamma)(j)$ be  the function value    
evaluated at a permutation $\psi \in \SSS_n$ 
with cycle type $2^j,1^{n-2j}$. 
For $0\leq r \leq \nhalf$,   define 

\begin{equation}
\label{eq:def_alpha}
\alpha_{r}^{}(\gamma)=\sum\limits_{j=0}^{r}
\ch^{-1}(\gamma)(j)\binom{r}{j}.
\end{equation}

When $\gamma=s_{\mu}$ for some $\mu \vdash n$ in 
\eqref{eq:def_alpha}, the quantity $\alpha_{r}^{}(s_{\mu})$ plays 
a very significant role in determining the immanantal polynomials of the Laplacian 
matrix $L_T$ of a tree $T$ (for detailed discussion see 
Chan and Lam \cite{hook_immanant_explained-chan_lam}  and 
Nagar and Sivasubramanian 
\cite{mukesh-siva-hook,mukesh-siva-immanantal_polynomial}).
 In \cite{chan-lam-binom-coeffs-char}, Chan and Lam  
 proved the following result. 

 \begin{theorem}[Chan and Lam]
 \label{thm:chan_lam_bino_char}
 For all $\mu \vdash n$ and for all $0\leq r \leq \nhalf$, 
 the quantity  
 $\alpha_{r}^{}(s_{\mu})$ is a non-negative integer 
 and  multiple of $2^r$.  Moreover  for all $1\leq r \leq \nhalf$, 
 $\alpha_{r}^{}(s_{\mu})=0$ if and only if  $l(\mu)>n-r$. 
 \end{theorem}

Motivated by Theorem \ref{thm:chan_lam_bino_char}, we shall  
prove some results involving monomial and power sum symmetric functions. 
We also describe a few results that we conjectured but were unable to prove. 
Some programming codes which verify our conjectures for small values are given at the end. 
These codes are written by using the computer package ``SageMath".  
}

\section{GMF of tree Laplacians arising from $m_{\lambda}$, the monomial symmetric function}

We prove Theorem \ref{thm:main} in this Section.  
We need the notion of 
$\lambda$-brick tabloids of shape $\mu$ which is used to give a combinatorial interpretation of 
the quantity $\Gamma_{m_{\lambda}}(\psi)=M^{\lambda}(\psi)$, where 
$\lambda, \mu\vdash n$ and $\psi\in\SSS_n.$ 
We recall the following definition of 
$\lambda$-brick tabloid of shape $\mu$ defined by  E\u{g}ecio\u{g}lu  
and Remmel in \cite{remmel-monomial-sym-frob}. 

\begin{definition}
Let 
$\lambda \vdash n$ with $\lambda =\lambda_1\geq \lambda_2\geq\cdots\geq \lambda_{l(\lambda)}^{}$. 
Let $\mu \vdash n$  
and let $F_{\mu}$ be its Ferrers diagram. 
A $\lambda$-brick tabloid  $\sB_{\lambda,\mu}$ of shape $\mu$ is a filling of
$F_{\mu}$ with bricks $b_1,b_2,\ldots,b_{l(\lambda)}^{}$ 
of size $\lambda_1, \lambda_2,\ldots, \lambda_{l(\lambda)}^{}$ respectively 
such that brick $b_i$ covers exactly $\lambda_i$ squares of $F_{\mu}$ 
all of which lie in a single row of $F_{\mu}$ and no two bricks overlap. 
Here, bricks of the same size are indistinguishable. 	
\end{definition}

We refer the reader to the book by Mendes and Remmel  \cite[Chapter 2]{mendes-remmel-book} for
an introduction to $\lambda$-brick tabloids. 
Given a brick $b$ in $\sB_{\lambda,\mu}$, let $|b|$ denote the length of $b$. 
Define  $\wt_{\sB_{\lambda,\mu}}^{}(b)$ to be $|b|$ 
if $b$ is at the end of a row in $\sB_{\lambda,\mu}$ and $1$ otherwise. 
We next define a weight $w(\sB_{\lambda,\mu})$ for each 
$\lambda$-brick tabloid $\sB_{\lambda,\mu}$ of shape $\mu$ as follows: 
\begin{equation*}
\label{eqn:def_wt_bricks}
w(\sB_{\lambda,\mu})=\prod_{b\in \sB_{\lambda,\mu}} \wt_{\sB_{\lambda,\mu}}^{}(b).
\end{equation*}

In other words, $w(\sB_{\lambda,\mu})$ is the product of the 
lengths of the rightmost brick in each row of $\sB_{\lambda,\mu}$.
For $\lambda, \mu \vdash n$,  
let $\BT_{\lambda,\mu}$ be the set of all $\lambda$-brick tabloids of shape $\mu$. 
For $\lambda \vdash n$ and for $\psi \in \SSS_n$, 
 E\u{g}ecio\u{g}lu  and Remmel in 
\cite[Theorem 1]{remmel-monomial-sym-frob} gave the following 
combinatorial interpretation of $M^{\lambda}(\psi)$
which we need. 

\begin{theorem}[E\u{g}ecio\u{g}lu  and Remmel]
	\label{thm:main_thm_remmel_egec}
	For $\lambda \vdash n$, let  $M^{\lambda}=\ch^{-1}(m_{\lambda})$. 
	When a permutation $\psi \in \SSS_n$ has cycle type $\mu$, then 
	$$M^{\lambda}(\psi)=(-1)^{l(\lambda)-l(\mu)}\sum_{\sB_{\lambda,\mu} 
\in \BT_{\lambda,\mu}}w(\sB_{\lambda,\mu}).$$
\end{theorem}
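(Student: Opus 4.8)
The plan is to deduce the theorem from the classical fact that the characteristic map $\ch$ is an isometry, thereby converting the evaluation of the class function $M^{\lambda}=\ch^{-1}(m_{\lambda})$ into a transition-matrix computation among the bases of $\Lambda_{\QQ}^n$, which I then carry out combinatorially with brick tabloids. First I would reduce to a statement purely about symmetric functions. As recalled in Section~\ref{sec:prelims}, $\ch^{-1}(p_{\mu})$ is a scalar multiple of the indicator function $\mathbf{1}_{C_{\mu}}$ of the conjugacy class of type $\mu$; the standard normalization (see \cite{EC2}) is $\ch^{-1}(p_{\mu})=z_{\mu}\mathbf{1}_{C_{\mu}}$, where $z_{\mu}=\prod_{k\ge 1}k^{m_k(\mu)}m_k(\mu)!$. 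Writing $m_{\lambda}=\sum_{\nu\vdash n}c_{\lambda\nu}\,p_{\nu}$ in the power-sum basis and applying the linear map $\ch^{-1}$, I evaluate the resulting class function at a permutation $\psi$ of cycle type $\mu$; only the term $\nu=\mu$ survives, giving $M^{\lambda}(\psi)=z_{\mu}c_{\lambda\mu}$. Since $\langle p_{\nu},p_{\mu}\rangle=z_{\mu}\delta_{\nu\mu}$, we also have $z_{\mu}c_{\lambda\mu}=\langle m_{\lambda},p_{\mu}\rangle$, and because $\{h_{\lambda}\}$ and $\{m_{\lambda}\}$ are dual bases for the Hall inner product, $\langle m_{\lambda},p_{\mu}\rangle$ is precisely the coefficient of $h_{\lambda}$ when $p_{\mu}$ is expanded in the complete homogeneous basis. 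Thus the theorem is equivalent to the purely algebraic identity
\[
  [h_{\lambda}]\,p_{\mu} \;=\; (-1)^{l(\lambda)-l(\mu)} \sum_{\sB_{\lambda,\mu}\in \BT_{\lambda,\mu}} w(\sB_{\lambda,\mu}).
\]

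The heart of the proof is the single-row case $\mu=(n)$, which I would obtain from the generating function $H(t)=\sum_{a\ge 0}h_a t^a=\exp\bigl(\sum_{k\ge 1}p_k t^k/k\bigr)$. Logarithmic differentiation gives $tH'(t)/H(t)=\sum_{k\ge 1}p_k t^k$, so $p_n=[t^n]\,(tH')\,H^{-1}$, where $tH'(t)=\sum_{a\ge 1}a\,h_a t^a$ and $H^{-1}=\sum_{s\ge 0}(-1)^s\bigl(\sum_{b\ge 1}h_b t^b\bigr)^s$. Reading this product combinatorially, a contribution to $[t^n]$ is an ordered tiling of a row of length $n$ by bricks in which the \emph{rightmost} brick comes from the factor $tH'$ and is therefore weighted by its length $a$ (exactly matching $\wt_{\sB_{\lambda,\mu}}$), while the remaining $s$ bricks come from $H^{-1}$ and carry the global sign $(-1)^s=(-1)^{(\#\text{bricks})-1}$. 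Collecting the coefficient of $h_{\lambda}$, that is, fixing the multiset of brick lengths to be $\lambda$, yields $[h_{\lambda}]p_n=(-1)^{l(\lambda)-1}\sum_{\sB_{\lambda,(n)}}w(\sB_{\lambda,(n)})$, which is the desired identity for $\mu=(n)$ since $l((n))=1$.

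The general case then follows by multiplicativity. Writing $p_{\mu}=\prod_{j=1}^{l(\mu)}p_{\mu_j}$ and expanding each factor, the product $h_{\lambda^{(1)}}\cdots h_{\lambda^{(l(\mu))}}$ equals $h_{\lambda}$ precisely when the multiset union of the $\lambda^{(j)}$ is $\lambda$, so $[h_{\lambda}]p_{\mu}=\sum\prod_j[h_{\lambda^{(j)}}]p_{\mu_j}$, the sum ranging over all ways to split $\lambda$ into row-contents $\lambda^{(j)}\vdash\mu_j$. Substituting the single-row identity, the signs multiply to $(-1)^{\sum_j(l(\lambda^{(j)})-1)}=(-1)^{l(\lambda)-l(\mu)}$ (using $\sum_j l(\lambda^{(j)})=l(\lambda)$ and that there are $l(\mu)$ rows), and the product of the weighted row-sums reassembles into $\sum_{\sB_{\lambda,\mu}}w(\sB_{\lambda,\mu})$, since a $\lambda$-brick tabloid of shape $\mu$ is exactly a choice of row-contents together with a tiling of each row, its weight being the product of the per-row weights. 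I expect the main obstacle to be precisely this last bookkeeping: one must verify that distributing the \emph{indistinguishable} equal-size bricks of total content $\lambda$ among the rows introduces no over- or under-counting relative to $\BT_{\lambda,\mu}$, and that the sign exponent collapses exactly to $l(\lambda)-l(\mu)$. Once the single-row identity is pinned down, this combinatorial factorization is the only delicate point, everything else being the routine isometry-and-duality reduction of the first step.
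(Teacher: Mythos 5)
The paper does not actually prove Theorem \ref{thm:main_thm_remmel_egec}: it is imported verbatim from E\u{g}ecio\u{g}lu and Remmel \cite[Theorem 1]{remmel-monomial-sym-frob}, so there is no in-paper argument to compare against. Your proof is correct and self-contained. The reduction $M^{\lambda}(\psi)=z_{\mu}c_{\lambda\mu}=\langle m_{\lambda},p_{\mu}\rangle=[h_{\lambda}]\,p_{\mu}$ is sound (the normalization $\ch^{-1}(p_{\mu})=z_{\mu}\mathbf{1}_{C_{\mu}}$ is the one compatible with $\ch(g)=\sum_{\mu}z_{\mu}^{-1}g(\mu)p_{\mu}$, and the $h$--$m$ duality does convert the Hall pairing into a coefficient extraction). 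The single-row identity obtained from $tH'(t)/H(t)=\sum_{k\ge 1}p_kt^k$ is the standard derivation of the $p$-to-$h$ transition matrix, with the distinguished factor from $tH'$ correctly identified with the weighted terminal brick. The bookkeeping step you flag as delicate does go through: a term of $\prod_j p_{\mu_j}$ contributing to $h_{\lambda}$ corresponds to a choice, for each row $j$ of $F_{\mu}$, of a composition of $\mu_j$, with total part-multiset $\lambda$; since a $\lambda$-brick tabloid is a filling of the fixed diagram $F_{\mu}$ (rows are distinguishable even when of equal length, only same-size bricks within the multiset are identified), this is exactly an element of $\BT_{\lambda,\mu}$, with no over- or under-counting. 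A quick sanity check: $p_2^2=4h_{(2,2)}-4h_{(2,1,1)}+h_{(1^4)}$, and the coefficient $-4$ of $h_{(2,1,1)}$ is $(-1)^{3-2}$ times the total weight $2+2$ of the two tabloids obtained by placing the $2$-brick in either row of $F_{(2,2)}$. Your argument is essentially the original generating-function proof; what it buys over the paper's bare citation is that the only case actually used downstream (Corollary \ref{cor:remmel_mono}, $\mu=2^j,1^{n-2j}$) drops out immediately from your row-by-row product formula.
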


From the definition of $\sB_{\lambda,\mu}$, it is easy to see that 
if $\lambda$ is not a refinement of $\mu$,  then $\BT_{\lambda,\mu}=\emptyset$. 
Thus $M^{\lambda}(\psi)=0$, unless when $\lambda$ refines the cycle type $\mu$ of $\psi$. 
Let $M^{\lambda}(j)$ denote the value of the function $M^{\lambda}(\cdot)$ 
evaluated at a permutation $\psi \in \SSS_n$ with cycle type $2^j,1^{n-2j}$.  
We start with the following simple consequence of Theorem \ref{thm:main_thm_remmel_egec}, 

\begin{corollary}
	\label{cor:remmel_mono}
	Let $\lambda \vdash n$ and let $\psi \in \SSS_n$ such that 
	the cycle type of $\psi$ is $\mu=2^j,1^{n-2j}$. Then, 
	$$M^{\lambda}(j)= \left\{  
	\begin{array}{c c}
	(-1)^{j-k}2^k\binom{j}{k} & \mbox{ if } \lambda=2^k,1^{n-2k} \mbox{ and } k\leq j  \\
	0 &  \mbox{ otherwise }
	\end{array}.
	\right.
	$$
\end{corollary}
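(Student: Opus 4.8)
The plan is to specialize Theorem~\ref{thm:main_thm_remmel_egec} to the cycle type $\mu = 2^j, 1^{n-2j}$ and then carry out the weighted enumeration of brick tabloids in $\BT_{\lambda,\mu}$ explicitly. By that theorem, $M^{\lambda}(j) = (-1)^{l(\lambda) - l(\mu)} \sum_{\sB_{\lambda,\mu} \in \BT_{\lambda,\mu}} w(\sB_{\lambda,\mu})$, so everything reduces to understanding which $\lambda$ give nonempty $\BT_{\lambda,\mu}$, counting the tabloids with their weights, and tracking the sign.

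First I would dispose of the ``otherwise'' case. As noted after Theorem~\ref{thm:main_thm_remmel_egec}, $\BT_{\lambda,\mu} = \emptyset$ unless $\lambda$ refines $\mu$. Since every part of $\mu = 2^j, 1^{n-2j}$ equals $1$ or $2$, and each brick must lie within a single row of $F_{\mu}$, every brick has size $1$ or $2$; hence $\lambda$ can only have parts of size $1$ or $2$, forcing $\lambda = 2^k, 1^{n-2k}$ for some $k \geq 0$. Moreover a size-$2$ brick fills an entire row of length $2$, and there are exactly $j$ such rows, so a valid placement of $k$ size-$2$ bricks requires $k \leq j$. When either condition fails, $M^{\lambda}(j) = 0$.

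Next, assuming $\lambda = 2^k, 1^{n-2k}$ with $k \leq j$, I would enumerate the tabloids and compute $w(\sB_{\lambda,\mu})$. Each of the $n-2j$ rows of length $1$ admits only a single size-$1$ brick, contributing weight $1$. Each of the $j$ rows of length $2$ is filled either by one size-$2$ brick (rightmost brick has length $2$, so weight $2$) or by two size-$1$ bricks (rightmost has length $1$, so weight $1$). Placing the $k$ available size-$2$ bricks amounts to choosing which $k$ of the $j$ length-$2$ rows receive them, giving $\binom{j}{k}$ distinct tabloids, each of total weight $2^k \cdot 1^{j-k} \cdot 1^{n-2j} = 2^k$. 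Therefore $\sum_{\sB_{\lambda,\mu} \in \BT_{\lambda,\mu}} w(\sB_{\lambda,\mu}) = \binom{j}{k} 2^k$.

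Finally I would settle the sign. Since $l(\lambda) = k + (n-2k) = n-k$ and $l(\mu) = j + (n-2j) = n-j$, the exponent is $l(\lambda) - l(\mu) = j - k$, and multiplying through gives $M^{\lambda}(j) = (-1)^{j-k} 2^k \binom{j}{k}$, exactly the claimed formula. I do not expect a genuine obstacle here: once Theorem~\ref{thm:main_thm_remmel_egec} is invoked, the argument is a direct count. The only point demanding care is the weight bookkeeping---being precise that \emph{only} the rightmost brick of each row contributes its length, so that the length-$1$ rows and the ``split'' length-$2$ rows each contribute a factor of $1$ and the product collapses cleanly to $2^k$.
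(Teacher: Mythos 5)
Your proposal is correct and follows essentially the same route as the paper's own proof: both specialize Theorem~\ref{thm:main_thm_remmel_egec}, split into the cases where $\lambda$ does or does not refine $\mu=2^j,1^{n-2j}$, count the $\binom{j}{k}$ tabloids each of weight $2^k$, and compute the sign from $l(\lambda)-l(\mu)=j-k$. Your write-up is in fact slightly more explicit about why the split length-$2$ rows and the length-$1$ rows contribute weight $1$, but there is no substantive difference in approach.
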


\begin{proof}
We divide the proof into two cases when $\lambda$ is  a 
refinement of $\mu$ and when $\lambda$ is not a refinement of $\mu$. 
If $\lambda$ is a refinement of $\mu$, then we must have  
$\lambda=2^k,1^{n-2k}$ for some 
$k$ with $0 \leq k\leq j.$ In this case, all the $k$ bricks of 
length $2$ must be  placed in some of the $k$ rows from the first $j$ 
rows of $F_{\mu}$. Thus, the number of 
such $\lambda$-brick tabloids $\sB_{\lambda,\mu}$ of 
shape $\mu$ is $\binom{j}{k}$, and each $\sB_{\lambda,\mu}$  
contributes the weight $2^k$ in the summation of   Theorem \ref{thm:main_thm_remmel_egec}.
Thus,  the total contribution in  $M^{\lambda}(j)$ is  $2^k\binom{j}{k}$ and   $l(\lambda)-l(\mu)=j-k$. Hence  
$M^{\lambda}(j)= (-1)^{j-k}2^k\binom{j}{k}$.

We next consider the case when $\lambda$ is not a refinement of $\mu$.   
By Theorem \ref{thm:main_thm_remmel_egec},  
$M^{\lambda}(j)=0$,  completing the proof. 
\end{proof}

\vspace{2mm} 
 
We next prove Lemma \ref{lem:frob_inv_monomial} which says that for all $\lambda \vdash n$ 
and for $0\leq i \leq \nhalf$, the quantity 
$\alpha_{i}^{}(m_{\lambda})$ is a non-negative integral  multiple of $2^i$.

\vspace{2mm}

\begin{proof}(of Lemma 2)
By Corollary \ref{cor:remmel_mono},  when $\lambda\neq 2^k,1^{n-2k}$, 
$M^{\lambda}(j)= 0$ for all $j$. Thus, from \eqref{eqn:main}, 
$\alpha_{i}^{}(m_{\lambda})=0$ for all $i=0,1,\ldots,\nhalf.$ 
When $\lambda= 2^k,1^{n-2k}\vdash n$ for some $k$ 
with $0\leq k \leq \nhalf$, by Corollary \ref{cor:remmel_mono}, we see that 
	\begin{eqnarray*}
		\alpha_{i}^{}(m_{\lambda}) & = & \sum_{j=0}^{i}M^{\lambda}(j)\binom{i}{j} 
		=  
		\sum\limits_{j=0}^{i}(-1)^{j-k}2^k\binom{j}{k}\binom{i}{j}   \\
		& = & 
		2^k \binom{i}{k}\sum\limits_{j=k}^{i-k}(-1)^{j-k}\binom{i-k}{j-k} 
		=  \left\{  
		\begin{array}{c c}
			2^i & \mbox{ if } i=k  \\
			0 &  \mbox{ otherwise }
		\end{array} .
		\right.
	\end{eqnarray*} 
	
	The proof is complete.
\end{proof}

\vspace{2mm}

Let the tree $T$ have $n$ vertices and for $0\leq i,r\leq n$, 
let $a_{i,r}^{}(T,q)=a_{i,r}^T(q)$ be the polynomial in $q^2$ with non-negative real coefficients, 
defined by Nagar and Sivasubramanian in \cite[page 7]{mukesh-siva-gts}.
They showed the following result  (see 
\cite[Lemmas 19, 23 and Corollary 22]{mukesh-siva-gts}). 

\begin{lemma}[Nagar and Sivasubramanian]
	\label{lem:diff_a_k,r(T,q)}
	Let $T_1$ and $T_2$ be two trees with $n$ vertices such that  $T_2$ covers $T_1$ in $\GTS_n$. 
	Then for all $0\leq i,r\leq n$, we assert that  $a_{i,r}^{}(T_1,q)-a_{i,r}^{}(T_2,q) \in \RR^+[q^2]$. 
\end{lemma}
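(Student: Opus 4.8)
The plan is to deduce the coefficient-wise inequality $a_{i,r}^{}(T_1,q)-a_{i,r}^{}(T_2,q)\in\RR^+[q^2]$ from a single combinatorial device: a weight-preserving injection, for each fixed pair $(i,r)$, from the set of objects enumerated by $a_{i,r}^{}(T_2,q)$ into the set of objects enumerated by $a_{i,r}^{}(T_1,q)$. Recall that $a_{i,r}^{}(T,q)$ is the generating polynomial, in the variable $q^2$, over all pairs $(B,\mathcal{O})$ where $B\subseteq V(T)$ has $|B|=r$ and $\mathcal{O}$ is an away-orientation of $T$ restricted to $B$ having exactly $i$ bidirected edges, each such pair contributing $q^{2\,\awy_B^T(\cdot)}$. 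If I can produce an injection $\Phi$ from the $T_2$-objects to the $T_1$-objects that preserves $r$, preserves the bidirected-edge count $i$, and preserves the statistic $\awy$ (hence the power of $q^2$), then the image of $\Phi$ is a subfamily of the $T_1$-objects whose generating polynomial equals $a_{i,r}^{}(T_2,q)$, and the unmatched $T_1$-objects account for the difference, which is therefore a polynomial in $q^2$ with non-negative coefficients.

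First I would pin down the precise local structure of a generalized tree shift cover. When $T_2$ covers $T_1$ in $\GTS_n$, the two trees agree outside a bounded region determined by the shift: there are two distinguished vertices together with a collection of hanging subtrees, and passing from $T_1$ to $T_2$ reattaches some of these subtrees from one distinguished vertex to the other. I would fix notation for the two special vertices, the moved subtrees, and the edges incident to them, and record that every edge and vertex of $T_1$ and $T_2$ lying outside this region is in canonical bijection. With this in hand, I would define $\Phi$ to act as the identity on the common part and to redirect the orientation-and-subset data only across the reattached edges. Concretely, given a $T_2$-object $(B,\mathcal{O})$, the subset $B$ is transported to a subset of $V(T_1)$ of the same size via the canonical vertex identification, and the orientation is rerouted through the relocated edges so as to yield a valid away-orientation of $T_1$ restricted to the transported subset.

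The three invariants to verify are that $|B|=r$ is preserved, which is immediate since $\Phi$ only relocates and never adds or deletes vertices; that the number of bidirected edges stays equal to $i$, so bidirected edges must be sent to bidirected edges; and that $\awy_B^{T_2}(\cdot)=\awy_{\Phi(B)}^{T_1}(\cdot)$, which guarantees equality of the associated power of $q^2$. Once these three are established, the generating-polynomial identity $a_{i,r}^{}(T_1,q)=a_{i,r}^{}(T_2,q)+(\text{contribution of the unmatched }T_1\text{-objects})$ holds, and since the unmatched contribution is a sum of terms $q^{2\,\awy}$ it lies in $\RR^+[q^2]$, which is exactly the assertion.

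The main obstacle, as always in this family of arguments, is controlling the statistic $\awy_B^T(\cdot)$ and the bidirected-edge count under the reattachment, since the shift alters adjacencies precisely at the two special vertices, which is exactly where these weights are most sensitive. I would expect to need a careful, multi-case definition of $\Phi$ depending on how $B$ and the orientation meet the special vertices and the moved subtrees, and the delicate points will be, first, proving that $\Phi$ is genuinely injective across all cases rather than merely weight-dominating, and second, checking that no reattachment secretly creates or destroys a bidirected edge or shifts the $\awy$-value. I would calibrate the case analysis by verifying these invariances on the elementary shift move and on small trees before assembling the general argument; this reduction to a single elementary cover, combined with the earlier lemmas recalled in the excerpt, then yields the stated difference lying in $\RR^+[q^2]$.
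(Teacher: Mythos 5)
Your overall strategy is the right one, and it is in fact the one this paper attributes to the source of the lemma: the statement is imported from the earlier work \cite{mukesh-siva-gts} (Lemmas 19, 23 and Corollary 22 there), and the present paper explicitly describes that proof as ``an injection between the objects counting $c_{\lambda,r}^{\sL_{T_2}^q}(q)$ and those counting $c_{\lambda,r}^{\sL_{T_1}^q}(q)$.'' So a weight-preserving injection from the $(B,\mathcal{O})$-pairs of $T_2$ into those of $T_1$, fixed $r$ and fixed bidirected count $i$, preserving $\awy$, is exactly the intended mechanism, and your observation that the unmatched $T_1$-objects then account for a difference in $\RR^+[q^2]$ is the correct way to conclude.

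The difficulty is that your write-up stops at the point where the mathematics begins. You never define the generalized tree shift precisely (which two vertices are distinguished, which subtrees move, and what the covering relation in $\GTS_n$ actually requires of them), you never define $\Phi$ beyond ``redirect the orientation through the relocated edges,'' and the three invariants --- preservation of $|B|$, of the bidirected-edge count, and of the statistic $\awy_B^T(\cdot)$ --- are listed as items ``to verify'' rather than verified. The last two of these are the entire content of the lemma: the shift changes adjacencies exactly at the two special vertices, which is where bidirected edges can appear or disappear and where $\awy$ is computed, so asserting that a suitable multi-case $\Phi$ exists is not materially different from asserting the lemma itself. Injectivity across the cases is likewise flagged as delicate but not addressed. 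As it stands this is a correct plan for reconstructing the external proof, not a proof; to close the gap you would need the explicit local description of a $\GTS_n$ cover, the explicit definition of $\awy_B^T$, the case-by-case definition of $\Phi$, and the verification that each case preserves the weight and that distinct inputs have distinct images.
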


Using Lemmas \ref{lem:frob_inv_monomial} and  \ref{lem:diff_a_k,r(T,q)}, we can now prove Theorem \ref{thm:main}.

\vspace{2mm}
\begin{proof}(of Theorem 1) 
From Lemmas \ref{lem:frob_inv_monomial} and  \ref{lem:coeff_gmf_poly}, for $0 \leq r \leq n$ and for $j=1,2$, 
it is simple to  see that the coefficient of $(-1)^rx^{n-r}$ in $d_{m_{\lambda}}(xI-\sL_{T_j}^q)$ is given by 
\begin{equation}
\label{eqn:coeff_m}
c_{m_{\lambda},r}^{\sL_{T_j}^q}(q) 
=\sum_{i=0}^{\rhalf}\alpha_{i}(m_{\lambda})a_{i,r}^{}(T_j,q)
= \left\{  
\begin{array}{c c}
2^k a_{k,r}^{}(T_j,q)& \mbox{ if }  \lambda=2^k,1^{n-2k} \mbox{ and } k\leq \rhalf \\
0 &  \mbox{ otherwise }
\end{array} .
\right.
\end{equation}

By Lemma \ref{lem:diff_a_k,r(T,q)} and \eqref{eqn:coeff_m}, we get
$c_{m_{\lambda},r}^{\sL_{T_1}^q}(q)-c_{m_{\lambda},r}^{\sL_{T_2}^q}(q)\in \RR^+[q^2]$ for all $\lambda\vdash n$ 
and for all $r=0,1,\ldots, n.$ 
Furthermore, if  $\lambda\neq 2^k,1^{n-2k} \vdash n$ for all  $k$, then  from \eqref{eqn:coeff_m}, 
the  coefficient of $(-1)^rx^{n-r}$ in $d_{m_{\lambda}}(xI-\sL_{T_j}^q)$ is zero for all $r=0,1,\ldots, n$ and for $j=1,2$. 
The proof is complete.
\end{proof}

\vspace{2mm}

Thus, for all $\lambda\vdash n$ and for all $q\in \RR$,   going up on $\GTS_n$ decreases the 
coefficient of $(-1)^rx^{n-r}$ in $d_{m_{\lambda}}(xI-\sL_{T}^q)$, where $r=0,1,\ldots, n$. 
Consequently, for all positive integers $n$,  this monotonicity result on $\GTS_n$ shows that 
max-min pair of these coefficients is  $(P_n,S_n)$, where $P_n$ 
and $S_n$ are the path tree and the star tree on $n$ vertices respectively.
The following example illustrates Lemma \ref{lem:frob_inv_monomial}

\begin{example} 
	Let $n=15$. For all $\lambda\vdash n$ and for all $i\in \{0,1,2,3,4,5,6,7\}$, 
the values of  the quantity $\alpha_i(m_{\lambda})$ are tabulated in Table \ref{tab:alpha_r_m_lambda}. 
	
\begin{table}[h]
	$$ 	\begin{array}{|l||c|c|c|c|c|c|c|c|}
	\hline
	\lambda \vdash n=15 & i=0 & i=1 & i=2 & i=3  & i=4 & i=5 & i=6 & i=7	 \\
	\hline 
	\hline 
	\lambda= 1^{15} & 1 &	0 & 0 & 0 & 0 & 0 & 0 & 0\\
	\hline
	\lambda= 2,1^{13} & 0 &	2 & 0 & 0 & 0 & 0 & 0 & 0\\
	\hline
	\lambda= 2^2,1^{11} & 0 &	0 & 4 & 0 & 0 & 0 & 0 & 0\\
	\hline
	\lambda= 2^3,1^{9} & 0 &	0 & 0 & 8 & 0 & 0 & 0 & 0\\
	\hline
	\lambda= 2^4,1^{7} & 0 &	0 & 0 & 0 & 16 & 0 & 0 & 0\\
	\hline
	\lambda= 2^5,1^{5} & 0 &	0 & 0 & 0 & 0 & 32 & 0 & 0\\
	\hline
	\lambda= 2^6,1^{3} & 0 &	0 & 0 & 0 & 0 & 0 & 64 & 0\\
	\hline
	\lambda= 2^7,1 & 0 &	0 & 0 & 0 & 0 & 0 & 0 & 128\\
	\hline
	\lambda\neq 2^k,1^{n-2k} & 0 &	0 & 0 & 0 & 0 & 0 & 0 & 0\\
	\hline
	\end{array} $$
	\caption{The value of $\alpha_i(m_{\lambda})$.}
		\label{tab:alpha_r_m_lambda}
\end{table}

\end{example}

\section{GMF of tree Laplacians arising from $f_{\lambda}$, the forgotten symmetric function}

Let $f_{\lambda}$ denote the forgotten 
symmetric function indexed by  $\lambda \vdash n$ and let 
$F^{\lambda}=\ch^{-1}(f_{\lambda})$ denote its inverse Frobenius 
image. 
This section is devoted to show monotonicity of the coefficient of $(-1)^rx^{n-r}$ in $\zeta_{f_{\lambda}}^{\sL_T^q}(x)=d_{f_{\lambda}}(xI-\sL_T^q)$ when we go up along $\GTS_n$.  
For this, we need the following combinatorial interpretation of $F^{\lambda}(\psi)$ by
E\u{g}ecio\u{g}lu  and Remmel in  \cite[Theorem 8]{remmel-monomial-sym-frob}.  
 
 \begin{theorem}[E\u{g}ecio\u{g}lu  and Remmel]
 	\label{thm:remmel_forgottan_inte}
 Let $\lambda \vdash n$ and let $\psi \in \SSS_n$ be a permutation with cycle type $\mu$. Then, 
 $$F^{\lambda}(\psi)=(-1)^{n-l(\mu)}\sum_{\sB_{\lambda,\mu} \in \BT_{\lambda,\mu}}w(\sB_{\lambda,\mu}).$$
 	\end{theorem}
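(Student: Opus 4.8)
The plan is to obtain Theorem~\ref{thm:remmel_forgottan_inte} from the monomial case, Theorem~\ref{thm:main_thm_remmel_egec}, by pushing the fundamental involution $\omega$ of $\Lambda_{\QQ}^n$ through the Frobenius characteristic. The starting point is the defining relation $f_{\lambda} = \omega(m_{\lambda})$ between the forgotten and monomial bases. First I would record how $\omega$ transports to the class-function side. Since $\omega$ acts on the power-sum basis by $\omega(p_{\mu}) = (-1)^{n-l(\mu)} p_{\mu}$, and since $\ch(g) = \sum_{\mu \vdash n} z_{\mu}^{-1} g(\mu) p_{\mu}$ for any class function $g$ of $\SSS_n$, applying $\omega$ to $\ch(g)$ multiplies the coefficient of each $p_{\mu}$ by $(-1)^{n-l(\mu)}$. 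Because $(-1)^{n-l(\mu)} = \sgn(\psi)$ whenever $\psi$ has cycle type $\mu$, this identifies $\ch^{-1}(\omega\gamma)$ with the sign-twist of $\ch^{-1}(\gamma)$; concretely, $\ch^{-1}(\omega\gamma)(\psi) = \sgn(\psi)\,\ch^{-1}(\gamma)(\psi)$ for every $\psi \in \SSS_n$.

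Applying this with $\gamma = m_{\lambda}$, and writing $F^{\lambda} = \ch^{-1}(f_{\lambda})$ and $M^{\lambda} = \ch^{-1}(m_{\lambda})$, yields the key identity $F^{\lambda}(\psi) = \sgn(\psi)\, M^{\lambda}(\psi) = (-1)^{n-l(\mu)} M^{\lambda}(\psi)$ for any $\psi$ of cycle type $\mu$, in which the prefactor $(-1)^{n-l(\mu)}$ of the statement is already visible. It then remains to substitute the brick-tabloid evaluation of $M^{\lambda}(\psi)$ furnished by Theorem~\ref{thm:main_thm_remmel_egec} and to assemble the weights $w(\sB_{\lambda,\mu})$ so as to recover the claimed expression $(-1)^{n-l(\mu)}\sum_{\sB_{\lambda,\mu}\in\BT_{\lambda,\mu}} w(\sB_{\lambda,\mu})$. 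Along the way one sees that $F^{\lambda}(\psi) = 0$ unless $\lambda$ refines $\mu$, since $\BT_{\lambda,\mu} = \emptyset$ otherwise; this is the forgotten-basis analogue of Corollary~\ref{cor:remmel_mono}, and it is exactly the vanishing that will be needed when this theorem is later used to analyze $\alpha_i(f_{\lambda})$.

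The main work, and the step I expect to be most delicate, is the careful bookkeeping of signs: one must verify the transport identity $\ch^{-1}(\omega\gamma)(\psi) = \sgn(\psi)\,\ch^{-1}(\gamma)(\psi)$ from $\omega(p_{\mu}) = (-1)^{n-l(\mu)}p_{\mu}$ and the Frobenius expansion, and then confirm that inserting Theorem~\ref{thm:main_thm_remmel_egec} produces precisely the prefactor displayed in Theorem~\ref{thm:remmel_forgottan_inte}. A clean cross-check I would run in parallel is the direct route through power-sum coefficients: since $F^{\lambda}(\psi)$ equals $z_{\mu}$ times the coefficient of $p_{\mu}$ in $f_{\lambda} = \omega(m_{\lambda})$, the factor $(-1)^{n-l(\mu)}$ emerges immediately from $\omega(p_{\mu})$, while the brick-tabloid sum records the corresponding monomial coefficient; matching the two computations confirms the formula.
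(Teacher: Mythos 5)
Your route---transporting the involution $\omega$ through the Frobenius characteristic and then quoting Theorem \ref{thm:main_thm_remmel_egec}---is the natural one, and there is nothing in the paper to compare it with: the paper cites this theorem from E\u{g}ecio\u{g}lu--Remmel without proof. Your transport identity is correct: from $\omega(p_{\mu})=(-1)^{n-l(\mu)}p_{\mu}$ and $\ch(g)=\sum_{\mu\vdash n} z_{\mu}^{-1}g(\mu)p_{\mu}$ one does get $\ch^{-1}(\omega\gamma)(\psi)=\sgn(\psi)\,\ch^{-1}(\gamma)(\psi)$, hence $F^{\lambda}(\psi)=(-1)^{n-l(\mu)}M^{\lambda}(\psi)$ for $\psi$ of cycle type $\mu$. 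The genuine gap is in exactly the step you flag as ``most delicate'' and then do not carry out: Theorem \ref{thm:main_thm_remmel_egec} carries its own sign $(-1)^{l(\lambda)-l(\mu)}$, so substituting it yields
$$F^{\lambda}(\psi)=(-1)^{n-l(\mu)}(-1)^{l(\lambda)-l(\mu)}\sum_{\sB_{\lambda,\mu}\in\BT_{\lambda,\mu}}w(\sB_{\lambda,\mu})=(-1)^{n-l(\lambda)}\sum_{\sB_{\lambda,\mu}\in\BT_{\lambda,\mu}}w(\sB_{\lambda,\mu}),$$
not $(-1)^{n-l(\mu)}\sum w(\sB_{\lambda,\mu})$. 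Your write-up silently drops the factor $(-1)^{l(\lambda)-l(\mu)}$ when asserting the prefactor of the statement is ``already visible''; it is visible only before the substitution, and it changes afterwards. The same omission occurs in your cross-check: the coefficient of $p_{\mu}$ in $m_{\lambda}$ itself contributes the sign $(-1)^{l(\lambda)-l(\mu)}$, which you again ignore.

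This is not a slip you can repair while keeping the printed conclusion, because your own (correct) intermediate identities contradict the statement as printed; the sign in Theorem \ref{thm:remmel_forgottan_inte} must be a mis-transcription of $(-1)^{n-l(\lambda)}$. Concretely, take $n=2$, $\lambda=(1,1)$, and $\psi$ the transposition, so $\mu=(2)$. Then $f_{(1,1)}=\omega(m_{(1,1)})=\omega(e_2)=h_2=\frac{1}{2}\left(p_{(1,1)}+p_{(2)}\right)$, so $F^{(1,1)}(\psi)=z_{(2)}\cdot\frac{1}{2}=+1$; but there is a unique $(1,1)$-brick tabloid of shape $(2)$, of weight $1$, and $(-1)^{n-l(\mu)}=-1$, so the printed formula gives $-1$. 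Your chain gives $\sgn(\psi)M^{(1,1)}(\psi)=(-1)\cdot(-1)=+1=(-1)^{n-l(\lambda)}\cdot 1$, as it should. So a completed version of your argument proves the theorem with prefactor $(-1)^{n-l(\lambda)}$---equivalently $(-1)^{k}$ rather than $(-1)^{j}$ when $\lambda=2^k,1^{n-2k}$ and $\mu=2^j,1^{n-2j}$, a discrepancy that propagates to Corollary \ref{cor:remmel_forgottan} and Lemma \ref{lem:alpha_f}---whereas the assembly step as you wrote it would fail: no manipulation of the brick weights can produce the printed $\mu$-dependent sign.
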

 
 For $\lambda \vdash n$, let  $F^{\lambda}(j)$ denote the value of the function  
$F^{\lambda}(\cdot)$ evaluated at a permutation $\psi \in \SSS_n$ with cycle type $2^j,1^{n-2j}$. 
By Theorem \ref{thm:remmel_forgottan_inte}, the proof of the following 
corollary is 
identical to the proof of Corollary \ref{cor:remmel_mono}, we omit it and merely state the result.
\begin{corollary}
	\label{cor:remmel_forgottan}
	For $\lambda \vdash n$ and for  $0\leq j \leq \nhalf$,  we have 
	$$F^{\lambda}(j)= \left\{  
	\begin{array}{c c}
	(-1)^{j}2^k\binom{j}{k} & \mbox{ if } \lambda=2^k,1^{n-2k} \mbox{ and } k\leq j  \\
	0 &  \mbox{ otherwise }
	\end{array} .
	\right.
	$$
\end{corollary}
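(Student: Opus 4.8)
The plan is to mirror exactly the structure already used for Corollary~\ref{cor:remmel_mono}, since the final statement (Corollary~\ref{cor:remmel_forgottan}) differs from it only in the sign prefactor. The starting point is Theorem~\ref{thm:remmel_forgottan_inte}, which gives $F^{\lambda}(\psi) = (-1)^{n-l(\mu)}\sum_{\sB_{\lambda,\mu} \in \BT_{\lambda,\mu}} w(\sB_{\lambda,\mu})$ when $\psi$ has cycle type $\mu$. Specializing to $\mu = 2^j,1^{n-2j}$, the brick-counting sum $\sum_{\sB_{\lambda,\mu}} w(\sB_{\lambda,\mu})$ is governed purely by combinatorics of brick tabloids and is therefore \emph{identical} to the sum appearing in the monomial case. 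Hence the only thing that changes is the sign, and the entire content of the proof reduces to computing $(-1)^{n-l(\mu)}$ for this particular $\mu$.

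First I would observe, as in Corollary~\ref{cor:remmel_mono}, that $\BT_{\lambda,\mu} = \emptyset$ unless $\lambda$ refines $\mu = 2^j,1^{n-2j}$; since the only parts available in $\mu$ are $2$'s and $1$'s, a refining $\lambda$ must itself be of the form $2^k,1^{n-2k}$ with $0 \leq k \leq j$. In that case the same counting argument applies verbatim: the $k$ bricks of length $2$ must occupy $k$ of the first $j$ rows of $F_{\mu}$, giving $\binom{j}{k}$ tabloids, each of weight $2^k$, so $\sum_{\sB_{\lambda,\mu}} w(\sB_{\lambda,\mu}) = 2^k\binom{j}{k}$. When $\lambda$ does not refine $\mu$, the sum is empty and $F^{\lambda}(j) = 0$.

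The key remaining step is the sign computation. For $\mu = 2^j,1^{n-2j}$ the number of parts is $l(\mu) = j + (n-2j) = n-j$, so $(-1)^{n-l(\mu)} = (-1)^{n-(n-j)} = (-1)^{j}$. Substituting this into Theorem~\ref{thm:remmel_forgottan_inte} gives $F^{\lambda}(j) = (-1)^{j} \, 2^k \binom{j}{k}$ in the refining case and $0$ otherwise, which is exactly the claimed formula. This is the one place where the forgotten case genuinely diverges from the monomial case: there the exponent was $l(\lambda) - l(\mu) = (k + (n-2k)) - (n-j) = j-k$, yielding $(-1)^{j-k}$, whereas here the exponent $n - l(\mu)$ is independent of $k$ and yields $(-1)^{j}$.

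I do not anticipate a genuine obstacle, since the combinatorial core is inherited unchanged from the proof of Corollary~\ref{cor:remmel_mono} and only the scalar sign differs; indeed this is precisely why the authors state the corollary and assert the proof is identical. The one point demanding care is simply verifying the parity bookkeeping $l(\mu) = n-j$ and the consequent $(-1)^{n-l(\mu)} = (-1)^j$, so as not to inadvertently reintroduce the $k$-dependent sign from the monomial setting. Once that is checked, the result follows immediately.
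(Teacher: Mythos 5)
Your proposal is correct and follows exactly the route the paper intends: the paper omits the proof of Corollary \ref{cor:remmel_forgottan}, stating it is identical to that of Corollary \ref{cor:remmel_mono}, and your argument is precisely that adaptation, reusing the brick-tabloid count $2^k\binom{j}{k}$ and correctly replacing the sign $(-1)^{l(\lambda)-l(\mu)}=(-1)^{j-k}$ by $(-1)^{n-l(\mu)}=(-1)^{j}$ via $l(\mu)=n-j$. No gaps.
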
 

For a fixed $\lambda \vdash n$, let $\gamma=f_{\lambda}$ in \eqref{eqn:main}. 
We next calculate the quantity $\alpha_{i}(f_{\lambda})$ in the following lemma 
which will be used later in determining the coefficient of $(-1)^rx^{n-r}$ in $d_{f_{\lambda}}(xI-\sL_{T}^q).$ 
\begin{lemma}
	\label{lem:alpha_f}
	Let $\lambda \vdash n$. 
	Then for $0\leq i \leq \nhalf$,  we have 
	$$\alpha_{i}^{}(f_{\lambda})= \left\{  
	\begin{array}{c c}
	(-1)^i 2^i & \mbox{ if } \lambda=2^i,1^{n-2i}  \\
	0 &  \mbox{ otherwise }
	\end{array} .
	\right.
	$$
\end{lemma}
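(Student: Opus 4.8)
The plan is to mirror exactly the proof of Lemma \ref{lem:frob_inv_monomial}, substituting the forgotten-symmetric-function evaluation from Corollary \ref{cor:remmel_forgottan} in place of the monomial evaluation from Corollary \ref{cor:remmel_mono}. First I would observe that when $\lambda \neq 2^k,1^{n-2k}$ for any $k$, Corollary \ref{cor:remmel_forgottan} gives $F^{\lambda}(j) = 0$ for all $j$, so that directly from the defining sum \eqref{eqn:main} we get $\alpha_i^{}(f_{\lambda}) = 0$ for all $i$ with $0 \leq i \leq \nhalf$. This disposes of the ``otherwise'' branch immediately.

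For the remaining case, suppose $\lambda = 2^k,1^{n-2k}$ for some fixed $k$ with $0 \leq k \leq \nhalf$. Then I would plug the nonzero value $F^{\lambda}(j) = (-1)^{j} 2^k \binom{j}{k}$ (valid for $k \leq j$) into \eqref{eqn:main} and compute
\begin{equation*}
\alpha_i^{}(f_{\lambda}) = \sum_{j=0}^{i} \binom{i}{j} F^{\lambda}(j) = \sum_{j=k}^{i} (-1)^{j} 2^k \binom{j}{k} \binom{i}{j}.
\end{equation*}
The key algebraic manipulation is the standard trinomial (subset-of-a-subset) identity $\binom{i}{j}\binom{j}{k} = \binom{i}{k}\binom{i-k}{j-k}$, which lets me pull out $2^k \binom{i}{k}$ and reindex the remaining sum as $\sum_{j=k}^{i} (-1)^{j} \binom{i-k}{j-k}$. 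Setting $m = j-k$ turns this into $(-1)^{k}\sum_{m=0}^{i-k}(-1)^{m}\binom{i-k}{m}$, and the inner alternating binomial sum equals $(1-1)^{i-k} = 0$ whenever $i > k$ and equals $1$ when $i = k$. Hence $\alpha_i^{}(f_{\lambda}) = (-1)^k 2^k \binom{i}{k} \cdot (\text{that sum})$, which is $(-1)^i 2^i$ exactly when $i = k$ (where $\binom{i}{k}=1$ and $(-1)^k = (-1)^i$) and $0$ otherwise.

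The only point of genuine contrast with Lemma \ref{lem:frob_inv_monomial} is the sign bookkeeping: the monomial case had prefactor $(-1)^{j-k}$, whereas the forgotten case carries $(-1)^{j}$. This shifts the surviving value from $2^i$ to $(-1)^i 2^i$, but does not change which $(\lambda, i)$ pairs survive. I do not anticipate a real obstacle here, since both the trinomial identity and the vanishing of the alternating binomial sum are routine; the main care needed is tracking that the factor $(-1)^{k}$ emerging from the reindexing combines with the surviving term $i = k$ to give the claimed $(-1)^i$. Assembling the two branches completes the proof.
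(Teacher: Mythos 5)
Your proof is correct and follows essentially the same route as the paper's: the same case split on whether $\lambda = 2^k,1^{n-2k}$, the same substitution of Corollary \ref{cor:remmel_forgottan} into \eqref{eqn:main}, and the same trinomial identity plus alternating binomial sum to isolate the surviving term $i=k$. Your write-up is in fact a bit cleaner in its index bookkeeping (the paper's displayed sum has some typographical slips in the summation limits), but there is no substantive difference.
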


\begin{proof}
By Corollary \ref{cor:remmel_forgottan} when $\lambda\neq 2^k,1^{n-2k}$,  
$F^{\lambda}(j)=0$ for all $j$. By \eqref{eqn:main},  $\alpha_{i}^{}(f_{\lambda})=0$ for 
all $i=0,1,\ldots,\nhalf.$ 
We next assume $\lambda= 2^k,1^{n-2k}\vdash n$ for some $k$ 
with $0\leq k \leq \nhalf$. In this case, by Corollary \ref{cor:remmel_forgottan}, we see that 
\begin{eqnarray*}
\alpha_{i}^{}(f_{\lambda}) & = & \sum_{j=0}^{i}F^{\lambda}(j)\binom{i}{j} 
=  
\sum\limits_{j=0}^{i}(-1)^{j}2^k\binom{j}{k}\binom{i}{j}   \\
& = & 
2^k \binom{i}{k}\sum\limits_{j=k}^{i-k}(-1)^{j}\binom{i-k}{j-k} 
			 = 
	(-1)^k	2^k \binom{i}{k}\sum\limits_{i=0}^{i-k}(-1)^{i}\binom{i-k}{i} \\
	&	= & \left\{  
		\begin{array}{c c}
			(-1)^i	2^i & \mbox{ if } i=k  \\
			0 &  \mbox{ otherwise }
		\end{array} .
		\right.
	\end{eqnarray*} 
	
	All equalities above follow by simple 
	manipulations and from well known binomial identities. The proof is complete.
\end{proof}

\begin{theorem}
	\label{thm:main_f_lambda}
Let $T$ be a tree on $n$ vertices with $q$-Laplacian matrix $\sL_{T}^q$. 
Then  for $0\leq r \leq n$, the coefficient of $(-1)^rx^{n-r}$ in $d_{f_{\lambda}}(xI-\sL_{T}^q)$ is given by 
\begin{equation*}
\label{eqn:coeff_f}
c_{f_{\lambda},r}^{\sL_{T}^q}(q)
=\left\{  
\begin{array}{c c}
(-1)^k2^k a_{k,r}(T,q) & \mbox{ if } \lambda=2^k,1^{n-2k} \mbox{ with } k \leq \rhalf \\
0 &  \mbox{ otherwise }
\end{array}.
\right. 
\end{equation*} 
Furthermore, for all $\lambda\vdash n$
and all $q\in \RR$,
going up on $\GTS_n$ 
decreases $c_{f_{\lambda},r}^{\sL_{T}^q}(q)$ in absolute value.
\end{theorem}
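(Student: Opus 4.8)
The plan is to mirror the derivation of \eqref{eqn:coeff_m} for the monomial case, substituting the forgotten-function data from Lemma \ref{lem:alpha_f} in place of $\alpha_i(m_{\lambda})$. First I would invoke Lemma \ref{lem:coeff_gmf_poly} with $\gamma = f_{\lambda}$, which writes the coefficient of $(-1)^r x^{n-r}$ in $d_{f_{\lambda}}(xI - \sL_T^q)$ as
\[
c_{f_{\lambda}, r}^{\sL_T^q}(q) = \sum_{i=0}^{\rhalf} \alpha_i(f_{\lambda})\, a_{i,r}(T,q).
\]
By Lemma \ref{lem:alpha_f}, the multiplier $\alpha_i(f_{\lambda})$ vanishes unless $\lambda = 2^i, 1^{n-2i}$, in which case it equals $(-1)^i 2^i$. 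Hence if $\lambda$ is not of the form $2^k, 1^{n-2k}$ the whole sum collapses to $0$; and if $\lambda = 2^k, 1^{n-2k}$ then at most the single index $i = k$ can contribute, and it does contribute precisely when $k \leq \rhalf$. This yields $c_{f_{\lambda}, r}^{\sL_T^q}(q) = (-1)^k 2^k a_{k,r}(T,q)$ in that range and $0$ otherwise, which is exactly the claimed case formula.

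For the monotonicity statement, let $T_2$ cover $T_1$ in $\GTS_n$. When $\lambda \neq 2^k, 1^{n-2k}$, or when $k > \rhalf$, both coefficients are identically $0$, so there is nothing to prove. In the remaining case $\lambda = 2^k, 1^{n-2k}$ with $k \leq \rhalf$, I would pass to absolute values: since $a_{k,r}(T,q) \in \RR^+[q^2]$, it is non-negative for every real $q$, so
\[
\bigl| c_{f_{\lambda}, r}^{\sL_{T_j}^q}(q) \bigr| = 2^k a_{k,r}(T_j, q) \quad \text{for } j = 1, 2.
\]
Lemma \ref{lem:diff_a_k,r(T,q)} gives $a_{k,r}(T_1,q) - a_{k,r}(T_2,q) \in \RR^+[q^2]$, hence $a_{k,r}(T_1,q) \geq a_{k,r}(T_2,q) \geq 0$ for all $q \in \RR$. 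Multiplying by $2^k$ shows $\bigl| c_{f_{\lambda}, r}^{\sL_{T_1}^q}(q) \bigr| \geq \bigl| c_{f_{\lambda}, r}^{\sL_{T_2}^q}(q) \bigr|$, i.e.\ the absolute value decreases as we move up a cover relation; chaining cover relations then handles arbitrary comparable pairs.

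The computation itself is routine given Lemmas \ref{lem:alpha_f} and \ref{lem:diff_a_k,r(T,q)}, so the only point that needs care is conceptual rather than technical. Unlike the monomial case, where $\alpha_i(m_{\lambda}) = 2^i \geq 0$ forced the coefficient to be non-negative and allowed one to assert that the \emph{difference} of coefficients lies in $\RR^+[q^2]$, here the factor $(-1)^k$ makes $c_{f_{\lambda}, r}^{\sL_T^q}(q)$ possibly negative. One therefore cannot claim the signed difference lies in $\RR^+[q^2]$; the correct monotone quantity is the absolute value, and the argument must be routed through $\bigl| c_{f_{\lambda}, r}^{\sL_T^q}(q) \bigr| = 2^k a_{k,r}(T,q)$ rather than through the signed coefficient. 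Observing that the sign $(-1)^k$ is common to both trees and hence cancels under $|\cdot|$ is the one place where the forgotten-function case genuinely departs from the monomial case, and it is where I would focus the write-up.
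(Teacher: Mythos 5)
Your proposal is correct and follows the same route as the paper: apply Lemma \ref{lem:coeff_gmf_poly} with $\gamma=f_{\lambda}$, collapse the sum using Lemma \ref{lem:alpha_f}, and then deduce the monotonicity of $\left|c_{f_{\lambda},r}^{\sL_{T}^q}(q)\right|$ from Lemma \ref{lem:diff_a_k,r(T,q)}. Your explicit remark that the common sign $(-1)^k$ cancels under the absolute value is exactly the (unstated) justification behind the paper's final line, so there is nothing to add.
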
 

\begin{proof}
	From Lemmas \ref{lem:coeff_gmf_poly} and  \ref{lem:alpha_f},  
it is simple to see that 	
	\begin{equation*}
	c_{f_{\lambda},r}^{\sL_{T}^q}(q) 
	=\sum_{i=0}^{\rhalf}\alpha_{i}(f_{\lambda})a_{i,r}^{}(T,q)
	= \left\{  
	\begin{array}{c c}
	(-1)^k2^k a_{k,r}^{}(T,q)& \mbox{ if }  \lambda=2^k,1^{n-2k} \mbox{ and } k\leq \rhalf \\
	0 &  \mbox{ otherwise }
	\end{array} .
	\right.
	\end{equation*}
	Let $T_1$ and $T_2$ be two trees with $n$ vertices such that  $T_2$ covers $T_1$ in $\GTS_n$.  
By Lemma \ref{lem:diff_a_k,r(T,q)},  
$\left|c_{f_{\lambda},r}^{\sL_{T_1}^q}(q)\right|-\left|c_{f_{\lambda},r}^{\sL_{T_2}^q}(q)\right|\in \RR^+[q^2]$ 
completing the proof.
\end{proof}

\vspace{2mm}
Thus, for all $\lambda\vdash n$ and for all $ r=0,1,\ldots, n$, 
by Theorems \ref{thm:main} and \ref{thm:main_f_lambda} and Corollary \ref{cor:three}, we get 
that the coefficient of $x^r$ in $d_{\gamma}(xI-\sL_{T}^q)$ 
decreases as we go up along $\GTS_n$ in absolute value for each  
$\gamma\in \{m_{\lambda},s_{\lambda},p_{\lambda},h_{\lambda},e_{\lambda},f_{\lambda}\}$.  Plugging in
 $q=1$, we get the following corollary of 
Theorems \ref{thm:main} and \ref{thm:main_f_lambda} and Corollary \ref{cor:three}. 

\begin{corollary}
For all $\lambda\vdash n$ and for all 
$\gamma\in \{m_{\lambda},s_{\lambda},p_{\lambda},h_{\lambda},e_{\lambda},f_{\lambda}\}$, 
going up on $\GTS_n$ decreases  the  coefficient of $x^r$ in $d_{\gamma}(xI-L_T)$ in absolute value, 
for $r=0,1,\ldots,n$.  Here $L_T$ is the usual combinatorial Laplacian of the tree $T$.
\end{corollary}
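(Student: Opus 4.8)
The plan is to derive this corollary purely as a repackaging and specialization of the three monotonicity results already established for the $q$-Laplacian, so that no fresh combinatorial argument is needed. First I would fix a covering pair with $T_2$ covering $T_1$ in $\GTS_n$ and, for each of the six bases, recall the relevant $q$-Laplacian statement: for $\gamma \in \{s_{\lambda}, p_{\lambda}, h_{\lambda}, e_{\lambda}\}$ it is Corollary \ref{cor:three} together with Theorem \ref{thm:main_earlier}; for $\gamma = m_{\lambda}$ it is Theorem \ref{thm:main}; and for $\gamma = f_{\lambda}$ it is Theorem \ref{thm:main_f_lambda}. Writing $d_{\gamma}(xI - \sL_T^q) = \sum_{r=0}^n (-1)^r c_{\gamma,r}^{\sL_T^q}(q)\, x^{n-r}$, the coefficient of $x^{n-r}$ equals $(-1)^r c_{\gamma,r}^{\sL_T^q}(q)$, so its absolute value is exactly $|c_{\gamma,r}^{\sL_T^q}(q)|$. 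Reindexing $r \mapsto n-r$ converts ``coefficient of $x^{n-r}$'' into ``coefficient of $x^{r}$,'' which is all the corollary concerns, so it suffices to prove monotonicity of $|c_{\gamma,r}^{\sL_T^q}(q)|$ at $q=1$.

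The second step is to dispose of signs uniformly. For the five bases $\gamma \in \{m_{\lambda}, s_{\lambda}, p_{\lambda}, h_{\lambda}, e_{\lambda}\}$, Lemma \ref{lem:coeff_gmf_poly} writes $c_{\gamma,r}^{\sL_T^q}(q) = \sum_{i} \alpha_i(\gamma)\, a_{i,r}(T,q)$ with $\alpha_i(\gamma) \geq 0$ by Lemmas \ref{lem:frob_inv_monomial} and \ref{lem:p_h_e_sum}, and with $a_{i,r}(T,q) \in \RR^+[q^2]$; hence $c_{\gamma,r}^{\sL_T^q}(q) \in \RR^+[q^2]$ and its absolute value is itself. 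The cited results then give $c_{\gamma,r}^{\sL_{T_1}^q}(q) - c_{\gamma,r}^{\sL_{T_2}^q}(q) \in \RR^+[q^2]$, which is precisely a weak decrease in absolute value across the cover. For $\gamma = f_{\lambda}$ the coefficient carries the sign $(-1)^k$ and may be negative, but Theorem \ref{thm:main_f_lambda} already states its conclusion as $|c_{f_{\lambda},r}^{\sL_{T_1}^q}(q)| - |c_{f_{\lambda},r}^{\sL_{T_2}^q}(q)| \in \RR^+[q^2]$, so the same decrease holds.

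Finally I would specialize $q=1$. Since $\sL_T^1 = L_T$ is the usual combinatorial Laplacian, each $d_{\gamma}(xI - \sL_T^q)$ becomes $d_{\gamma}(xI - L_T)$, and every quantity lying in $\RR^+[q^2]$ evaluates to a non-negative real at $q=1$; thus across a single cover the absolute value of each coefficient of $d_{\gamma}(xI - L_{T_2})$ is at most that of $d_{\gamma}(xI - L_{T_1})$. To pass from covers to the full order of $\GTS_n$, I would note that any comparable pair $T \leq T'$ is joined by a saturated chain and apply the cover-wise inequality at each step, composing the weak decreases. I do not expect a genuine obstacle here; the only point requiring care is the sign bookkeeping, namely separating the five bases whose coefficients are honestly non-negative (where the absolute value is redundant) from the forgotten basis $f_{\lambda}$, whose monotonicity is intrinsically an absolute-value statement and is supplied directly by Theorem \ref{thm:main_f_lambda}. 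Once this split is in place the corollary is immediate.
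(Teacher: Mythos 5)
Your proposal is correct and follows essentially the same route as the paper, which also obtains this corollary by combining Theorem \ref{thm:main}, Theorem \ref{thm:main_f_lambda} and Corollary \ref{cor:three} and then setting $q=1$ (using that $\sL_T^1 = L_T$ and that membership in $\RR^+[q^2]$ gives non-negativity at $q=1$). Your added care about the sign bookkeeping for $f_{\lambda}$ versus the other five bases, and the explicit passage from covering relations to arbitrary comparable pairs via saturated chains, are exactly the (implicit) details the paper relies on.
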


\section*{Acknowledgement}
The first author would like to 
acknowledge SERB, Government of India for providing a national postdoctoral
fellowship with the file no. PDF/2018/000828.  
The second author acknowledges support from project grant
15IRCCFS003 given by IIT Bombay and from project MTR/2018/000453 
given by SERB, Government of India.

Our main theorem in this work was in its conjecture form, tested
using the computer package ``SageMath''. We thank the authors for generously releasing SageMath as an open-source package.
\bibliographystyle{acm}
\bibliography{main}
\end{document}